\DeclareSymbolFont{cyrletters}{OT2}{wncyr}{m}{n}
\DeclareMathSymbol{\Sha}{\mathalpha}{cyrletters}{"58}
\newcommand{\Q}{\mathbb{Q}}
\newcommand{\Z}{\mathbb{Z}}
\newcommand{\N}{\mathbb{N}}
\newcommand{\links}{\left(\begin{array}{cc}}
\newcommand{\rechts}{\end{array}\right)}
\newcommand{\bai}{\left[\begin{array}{cc}}
\newcommand{\dai}{\end{array}\right]}
\newcommand{\hidari}{\left(\begin{array}{c}}
\newcommand{\migi}{\end{array}\right)}
\newcommand{\CCC}{{\mathcal C}}
\newcommand{\Xloc}{\X_{loc}}
\newcommand{\cha}{{\mathrm{Char}}}
\newcommand{\Tam}{{\mathrm{Tam}}}
\newcommand{\sha}{{\mathrm{sha}}}
\newcommand{\Gal}{{\mathrm{Gal}}}
\newcommand{\Hom}{{\mathrm{Hom}}}
\newcommand{\coker}{{\mathrm{coker}\,}}
\newcommand{\GL}{{\mathrm{GL}}}
\newcommand{\ord}{{\mathrm{ord}}}
\newcommand{\col}{{\mathrm{Col}}}
\newcommand{\im}{{\mathrm{Im}\,}}
\newcommand{\Sel}{{\mathrm{Sel}}}
\newcommand{\Tr}{{\mathrm{Tr}}}
\newcommand{\gp}{{\mathfrak p}}
\newcommand{\gm}{{\mathfrak m}}
\renewcommand{\H}{{\mathcal{H}}}
\def\Iw{\mathop{\mathrm{Iw}}\nolimits}
\renewcommand{\phi}{{\varphi}}
\newcommand{\G}{{\mathcal G}}
\newcommand{\X}{{\mathcal X}}
\newtheorem{theorem}{Theorem}[section]
\newtheorem{algorithm}[theorem]{Algorithm}
\newtheorem{conjecture}[theorem]{Conjecture}
\newtheorem{corollary}[theorem]{Corollary}
\newtheorem{lemma}[theorem]{Lemma}
\newtheorem{propn}[theorem]{Proposition}
\newtheorem{proposition}[theorem]{Proposition}
\newtheorem{open problem}[theorem]{Open Problem}
\theoremstyle{definition}
\newtheorem{definition}[theorem]{Definition}
\newtheorem{remark}[theorem]{Remark}
\newtheorem{maintheorem}[theorem]{Main Theorem}
\begin{document}

\title{The \v{S}afarevi\v{c}-Tate group in cyclotomic $\Z_p$-extensions at supersingular primes}
\author{Florian Sprung (``ian")}
\email{ian.sprung@gmail.com}
\address{151 Thayer Street, Box \#1917, Providence, RI 02912, USA}
%

%
%\dedication{A dedication can be included here.}
\classification{11G05 (primary), 11R23 (secondary)}
\keywords{elliptic curve, supersingular reduction, Selmer group, Tate-Shafarevich group, Iwasawa Theory}
%\thanks{, and we thank whoever introduced the Topics Exam at Brown University.}

\begin{abstract}
We study the asymptotic growth of the $p$-primary component of the \v{S}afarevi\v{c}-Tate group in the cyclotomic direction at any odd prime of good supersingular reduction, generalizing work of Kobayashi. This explains formulas obtained by Kurihara, Perrin-Riou, and Nasybullin in terms of Iwasawa invariants of modified Selmer groups. \end{abstract}

\maketitle

%\vspace*{6pt}\tableofcontents  % for this guide only.
% A table of contents should normally not be included

\begin{section}{Introduction}
While the \v{S}afarevi\v{c}-Tate group came about when shedding the light of cohomology on the arithmetic of elliptic curves, it has also created new mysteries. One of them is its size, which we know to be finite over $\Q$ only when the analytic rank is $0$ or $1$, although this hasn't stopped mathematicians from incorporating it into the conjecture of Birch and Swinnerton-Dyer (BSD). Another mystery is how the size should behave as one varies the number field at hand. In the 1970's, Mazur initiated an answer to this question in the case of $\Z_p$-extensions when $p$ has good ordinary reduction - comparing favorably to versions of BSD along these $\Z_p$-extensions. 

The supersingular case (where $p$ divides $a_p=p+1-\#E(\mathbb{F}_p)$) has different growth patterns and was treated by Kurihara \cite{kurihara} under certain assumptions in the case of cyclotomic $\Z_p$-extensions, but it was Perrin-Riou \cite{perrinriou} who discovered a general growth pattern for cyclotomic $\Z_p$-extensions involving two pairs of (unspecified) constants. Natural questions to ask are: Why does this growth pattern occur? Where do her constants come from? Kobayashi \cite{kobayashi} has explained each pair of constants \textit{under the assumption $a_p=0$} as algebraic Iwasawa invariants of his two modified Selmer groups. His work compared favorably to the formulas of Pollack \cite{pollack} (who also assumed $a_p=0$) of special values of $L$-functions involving analytic Iwasawa invariants. 

The general supersingular case, however, has seemed less amenable to analysis so far. In this paper, we give \textit{three} possible growth formulas (which illlustrates the difficulty for the case $a_p\neq0$) involving algebraic Iwasawa invariants coming from two modified Selmer groups introduced in earlier work \cite{shuron} (think of them as Selmer groups coming from ``half" the local points): In one of the formulas, Perrin-Riou's two pairs of constants do indeed come from the two pairs of Iwasawa invariants (which happens for example when $a_p=0$, where we obtain Kobayashi's formula). The other two formulas involve just one pair of Iwasawa invariants - the \v{S}afarevi\v{c}-Tate group is \textit{modest} and chooses whichever pair makes it grow more slowly. 
 
 We also generalize Perrin-Riou's work to give the growth pattern of the \v{S}afarevi\v{c}-Tate group in the tower of number fields obtained by adjoining all $p$-power roots of unity to $\Q$. A growth formula for this tower had been announced by Nasybullin \cite{nasybullin} in the 1970's, but no proofs were given. The results in this paper compare favorably with formulas on the analytic side, which appear in a twin paper \cite{sprung} to this one. %given by Tate, and also Manin (CHECK HISTORY) 
 
More concretely, let $E$ be an elliptic curve over $\Q$ and $p$ an odd prime of good reduction, and let $\zeta_{p^{n+1}}$ be a primitive $p^{n+1}$-th root of unity. We consider the cyclotomic $\Z_p$-extension of $\Q$, i.e. the tower $\Q=\Q_0\subset \Q_1\subset\cdots \subset \Q_n\subset \cdots \subset \Q_\infty=\bigcup_n \Q_n$, where $\Q_n$ is the subfield of $\Q(\zeta_{p^{n+1}})$ so that $\Gal(\Q_n/\Q)\cong \Z/p^n\Z$. Denote the Galois group of the entire cyclotomic extension by $\Gamma:=\Gal(\Q_\infty/\Q)\cong\Z_p$ and define the \v{S}afarevi\v{c}-Tate group at level $n$:
\[\Sha(E/\Q_n):=\ker\left(H^1(\Q_n,E)\rightarrow \prod_v H^1(\Q_{n,v},E)\right),\]
where $v$ ranges over all places of $\Q_n$. Assume for the rest of the paper that the $p$-primary component of $\Sha(E/\Q_n)$ is finite, and denote its size by $p^{e_n}:=\#\Sha(E/\Q_n)[p^\infty]$.

When $p$ is ordinary, i.e. $p \nmid a_p=p+1-\#E(\mathbb{F}_p)$, Mazur discovered that
\[e_n-e_{n-1}=(p^n-p^{n-1})\mu+\lambda -r_\infty \text{ for $n \gg 0$},\]
which is an analogue of a classical result by Iwasawa on the $p$-part of the class number of $\Q_n$. Here, $r_\infty$ is the rank of $E(\Q_\infty)$ (which is finite by theorems of Kato \cite{kato} and Rohrlich \cite{rohrlich} even when $p$ is supersingular), and $\mu$ and $\lambda$ are the Iwasawa invariants of the Pontryagin dual of the $p$-primary Selmer group $\Sel_p(E/\Q_\infty)$ over $\Q_\infty$. This is the following kernel:
\[\Sel_p(E/\Q_\infty):=\ker\left(H^1(\Q_\infty,E[p^\infty])\rightarrow \prod_v H^1(\Q_{\infty,v},E[p^\infty])/E(\Q_{\infty,v})\otimes\Q_p/\Z_p\right).\] This relates to (the residue part of) the Birch and Swinnerton-Dyer conjecture for $E(\Q_n)$, denoted $BSD_{p^n}$, as follows: This $BSD_{p^n}$ states that $\#\Sha(E/\Q_n)$ should equal
\[\sha_n:=\lim_{s\rightarrow 1}\frac{L(E/\Q_n,s)}{(s-1)^{r'_n}}\frac{\#E^{tors}(\Q_n)^2\sqrt{D(\Q_n)}}{\Omega_{E/\Q_n}R(E/\Q_n)\Tam(E/\Q_n)},\]
where $D(\Q_n)$ is the discriminant, $R(E/\Q_n)$ the regulator, $\Tam(E/\Q_n)$ the product of the Tamagawa numbers, $\Omega_{E/\Q_n}$ the real period and $r_n'$ the order of vanishing of $L(E/\Q_n,s)$, conjecturally equal to the rank of $E(\Q_n)$. Now we know by Rohrlich \cite{rohrlich} that $r_\infty'=\lim_{n\rightarrow \infty} r_n'$ exists. When $p$ is ordinary, one can then show that
\[\ord_p(\sha_n)-\ord_p(\sha_{n-1})=(p^n-p^{n-1})\mu_{an}+\lambda_{an}-r_\infty' \text{ for $n \gg 0$},\]
where the integers $\mu_{an}$ and $\lambda_{an}$ are Iwasawa invariants of the $p$-adic $L$-function associated to the elliptic curve. The Iwasawa Main Conjecture would imply that $\mu_{an}=\mu$ and $\lambda_{an}=\lambda$, thereby proving the $p$-part of $BSD_{p^n}$ for all $n\geq N$ provided the $p$-part of $BSD_{p^N}$ is true for some large $N$.  A proof of the Iwasawa Main Conjecture has been given by Skinner and Urban \cite{skinnerurban}.

The main result of this paper is the following. 
\begin{maintheorem} Let $p$ be an odd supersingular prime, i.e. $p|a_p$. Assuming that $\Sha(E/\Q_n)[p^\infty]$ is finite, let $p^{e_n}:=\#\Sha(E/\Q_n)[p^\infty]$, and let $r_\infty$ be the rank of $E(\Q_\infty)$. Further, put
 \[q_n^\sharp:=\begin{cases} p^{n-1}-p^{n-2}+p^{n-3}-p^{n-4}+\cdots +p^2-p \text{ if $n$ is odd},\\p^n-p^{n-1}+p^{n-2}-p^{n-3}+\cdots +p^2-p \text{ if $n$ is even.}\end{cases} \]
\[q_n^\flat:=\begin{cases}  p^n-p^{n-1}+p^{n-2}-p^{n-3}+\cdots -p^2+p-1\text{ if $n$ is odd},\\p^{n-1}-p^{n-2}+p^{n-3}-p^{n-4}+\cdots +p-1 \text{ if $n$ is even.}\end{cases}\]
 Then there are integer invariants $\mu_\sharp,\lambda_\sharp,\mu_\flat,\lambda_\flat$ so that for $n\gg 0$, 
\[e_n-e_{n-1}=\begin{cases}(p^n-p^{n-1})\mu_\sharp+\lambda_\sharp -r_\infty+q_n^\sharp \text{ if $n$ is even}\\ (p^n-p^{n-1})\mu_\flat+\lambda_\flat -r_\infty+q_n^\flat \text{ if $n$ is odd}\end{cases}\]
when $a_p=0$ or $\mu_\sharp=\mu_\flat$, and
\[e_n-e_{n-1}=(p^n-p^{n-1})\mu_*+\lambda_* -r_\infty+q_n^*\]
 when $a_p\neq 0$ and $\mu_\sharp\neq \mu_\flat$, where $*\in \{\sharp,\flat\}$ is chosen so that $\mu_*=\min(\mu_\sharp,\mu_\flat)$. 
\end{maintheorem}
These formulas are very similar to Perrin-Riou's, but our methods allow us to interpret $\mu_\sharp,\lambda_\sharp$, and $\mu_\flat,\lambda_\flat$ as the $\mu$- and $\lambda$-invariants of the  Pontryagin duals $\X^\sharp$ resp. $\X^\flat$ of the modified Selmer groups $\Sel^\sharp$ resp. $\Sel^\flat$ defined in \cite{shuron}. Let $\Q_{\infty,p}$ (resp. $\Q_{n,p}$) be $\Q_\infty$ (resp. $\Q_{n}$) completed at (the prime above) $p$. The reason why we don't work with the Selmer group $\Sel(E/\Q_\infty)$ is that the local condition coming from all $\Q_{\infty,p}$-rational points of $E$ is too weak in the supersingular case, making its Pontryagin dual not $\Z_p[[\Gamma]]$-torsion. The two modified Selmer groups each have a stronger local condition $E_\infty^{\sharp/\flat}$ at $p$, which intuitively can be thought of as half-$\Q_{\infty,p}$-rational points of $E$:
\[\Sel_p^\sharp(E/\Q_\infty):=\ker\left(H^1(\Q_\infty,E[p^\infty])\rightarrow H^1(\Q_{\infty,p},E[p^\infty])/E_{\infty,\gp}^\sharp\right).\] 
$E_{\infty,\gp}^\sharp$ is the exact annihilator with respect to the Tate pairing of the kernel of a map $\col^\sharp:H^1(\Q_{\infty,p},T)\rightarrow \Z_p[[\Gamma]]$. This $\col^\sharp$ and its companion $\col^\flat$ each encode half the information of the local points as they \textit{together} know the geometry of $E(\Q_{\infty,p})\otimes \Q_p/\Z_p$: They respect the interplay of pairs of generators (constructed by Kobayashi) for the local points $E(\Q_{n,p})$ as Galois modules for varying $n$. When $\X^{\sharp/\flat}$ is not $\Z_p[[\Gamma]]$-torsion, we put $\mu^{\sharp/\flat}:=\infty$. Conjecturally, this never happens and we know it happens for at most one of the $\sharp,\flat$ (\cite[Theorem 7.14]{shuron}). %$\lambda_\sharp$ and $\lambda_\flat$ are the corresponding $\lambda$-invariants. 
Kobayashi has proved this theorem for the case $a_p=0$ in \cite{kobayashi}, where $\sharp=-,\flat=+$. %We should also remark that all this is a generalization of earlier work by Kurihara (\cite{kurihara}).

When $a_p\neq0$, the \v{S}afarevi\v{c}-Tate group grows using the \textit{smaller} one of the two invariants $\mu_{\sharp/\flat}$ if they are indeed different. We call this phenomenon \textit{modesty}.

From BSD considerations, one should have an analogous formula on the analytic side, which is a result of a twin paper to this one (concerning modular forms of weight two). When $p$ is supersingular, there are two integral $p$-adic $L$-functions with Iwasawa invariants $\mu^\sharp_{an}, \mu^\flat_{an},\lambda^\sharp_{an},\lambda^\flat_{an}$ (see \cite{pollack}\footnote{In Pollack's notation, we have $\sharp=+,\flat=-$. Note the sign switch with Kobayashi's notation!} for $a_p=0$ and the twin paper \cite{sprung} for any $p|a_p$). Further, we have for $n \gg 0$
\[\ord_p(\sha_n)-\ord_p(\sha_{n-1})=\begin{cases}(p^n-p^{n-1})\mu^\sharp_{an}+\lambda^\sharp_{an} -r_\infty+q_n^\sharp \text{ if $n$ is even}\\ (p^n-p^{n-1})\mu^\flat_{an}+\lambda^\flat_{an} -r_\infty+q_n^\flat \text{ if $n$ is odd}\end{cases}\]
when $a_p=0$ or $\mu^\sharp_{an}=\mu^\flat_{an}$, and
\[\ord_p(\sha_n)-\ord_p(\sha_{n-1})=(p^n-p^{n-1})\mu^*_{an}+\lambda^*_{an} -r_\infty+q_n^*,\]
where $*\in \{\sharp,\flat\}$ is chosen so that $\mu^*_{an}=\min(\mu^\sharp_{an},\mu^\flat_{an})$ when $a_p\neq 0$ and $\mu_\sharp^{an}\neq \mu_\flat^{an}$. 

 Pollack proved this formula when $a_p=0$ in \cite{pollack}. For general $p|a_p$, see \cite{sprung}. When the $p$-adic representation $\Gal(\overline{\Q}/\Q) \rightarrow \GL_{\Z_p}(T)$ on the automorphism group of the $p$-adic Tate module $T$ is surjective, we know half the Iwasawa Main Conjecture (see \cite[Theorem 1.4]{shuron} for a precise statement). This means that half of the $p$-part of $BSD_{p^N}$ for some large $N$ implies half of the $p$-part of $BSD_{p^n}$ for all $n \geq N$. 

%Mazur's method was to control Selmer groups along the $\Z_p$-extension to prove his theorem in the ordinary case. This method made it possible to define the Iwasawa invariants $\mu$ and $\lambda$, but breaks down when $p$ is supersingular. Although Kobayashi invented a control theore analogous method to control his $\pm$ Selmer groups in the case $a_p=0$, he didn't make use of it to prove 
 One of Mazur's tools for proving his formula for $e_n-e_{n-1}$ in the ordinary case was a theorem (gControl Theorem") concerning the Galois theoretic behavior of Selmer groups. While Kobayashi proved a supersingular analogue of this, he needed a different method to prove his formula for $e_n-e_{n-1}$: He decomposed $E(\Q_{n,p})$ into two parts $E^+(\Q_{n,p})$ and $E^-(\Q_{n,p})$ (with constant small overlap), and used this decomposition to prove nice properties of the images of two maps $\col_n^+$ and $\col_n^-$ (``Coleman maps") to obtain the two terms responsible for $e_n-e_{n-1}$. In this paper, we present a more genuine ``Coleman map method" to be able to treat the general supersingular case. The idea is to expand and generalize Kobayashi's treatment so well that we arrive at the nice properties without having to decompoe $E(\Q_{n,p})$ into two parts (which seems difficult for $a_p\neq0$). %This is Proposition \ref{auxiliary}.%, we scrutinize a map $\col_n$ (which coincides with the vector of maps $(\col_n^+,\col_n^-)$ defined in \cite{kobayashi} if $a_p=0$) to obtain our estimates.

\bf{Organization of Paper}. \rm In Section \ref{thecolemanmaps}, we review the Iwasawa theory for elliptic curves at supersingular primes as presented in \cite{shuron}. In particular, we explain how the Coleman maps $\col_n$ come about and give rise to the two modified Selmer groups $\Sel^\sharp$ and $\Sel^\flat$. In Section \ref{themainideas}, we explain the main ideas on how to derive our result from the theory presented in Section \ref{thecolemanmaps}, assuming the most important proposition (the Modesty Proposition \ref{modestypropn}) in this paper. It is this proposition that explains why the \v{S}afarevi\v{c}-Tate Group is so modest (i.e. chooses the smaller $\mu$-invariant) when growing in our tower of number fields. Its proof constitutes Section \ref{proofofthemodestyproposition}, which forms the technical heart of the paper and contains the most important new ideas. One idea is that of \textit{valuation matrices}, introduced in Definition \ref {valuationmatrix}, which allows us to estimate $p$-adic valuations of special values of appropriate functions. The other is a description of the cokernel of the map $\col_n$, obtained in Proposition \ref{auxiliary}, that ultimately allows us to use the estimates obtained from valuation matrices to describe the growth of the \v{S}afarevi\v{c}-Tate Group. In the last section, we explain the formulas of Perrin-Riou in terms of our Iwasawa invariants in the case of her setting.

A related question to this paper is the case of anticyclotomic extensions. \c{C}iperiani has proved in \cite{ciperiani} that the $p$-primary part of the \v{S}afarevi\v{c}-Tate in the full anticyclotomic extension has corank zero as an Iwasawa module under certain assumptions that in particular force $a_p=0$.

Another question is how to generalize to modular forms of higher weight: Lei, Loeffler, and Zerbes \cite{llz} have constructed Coleman maps in this context, using the theory of Wach modules, and working with $\Z_p[[\Gamma]]\otimes \Q$. They construct Selmer groups that depend on the basis of their Wach module (which in particular may make their Iwasawa invariants depend on the chosen basis as well). An explicit version of their work, hopefully independent of any bases, should thus lead to nice generalizations of the results of this paper.

%\textit{Acknowledgments} [Thank people here]
\end{section}
\begin{section}{The Coleman Maps}\label{thecolemanmaps}
We first fix some notation. Let $n\in \N$ be an integer, %$[\frac{a}{b}]$ is the greatest integer not greater than $\frac{a}{b}$[DID THIS EVER END UP COMING UP ??], 
$I$ the two-by-two identity matrix, and ${\Phi_n(X)=\displaystyle \sum_{t=0}^{p-1} X^{p^{n-1}t}}$ the $p^n$-th cyclotomic polynomial, which is the irreducible polynomial for any primitive $p^n$-th root of unity $\zeta_{p^n}$. %\in \MU_{p^n}$.
Put $k_n=\Q_p(\zeta_{p^{n+1}})$, and let $\G_n=\Gal(\Q(\zeta_{p^{n+1}})/\Q)\cong\Gal(k_n/\Q)$. We then have the decomposition $\G_n\cong (\Z/{p^{n+1}\Z})^\times \cong\Delta \times \Gamma_n$, where $\Delta\cong \Z/(p-1)\Z$ since $p$ is odd, and $\Gamma_n\cong \Z/p^n\Z$. 
 Taking inverse limits, put $\G_\infty= \displaystyle\varprojlim_n \G_n \cong \Delta \times \Gamma$, where $\Gamma\cong \Z_p$. Now fix a topological generator $\gamma$ of $\Gamma$. By sending $\gamma$ to $(1+X)$, we can identify $\Lambda=\Z_p[[\G_\infty]]$ with $\Z_p[\Delta][[X]]$.
 Denoting the image of $\gamma$ under the projection $\G_\infty \rightarrow \G_n$ by $\gamma_n$, we can similarly identify $\Lambda_n=\Z_p[\G_n]$ with $\Z_p[\Delta][[X]]/(\omega_n(X))$, where $\omega_n(X)= (1+X)^{p^n}-1$. See \cite[Chapter Seven]{washington}.
 We write $T$ for the $p$-adic Tate module, and $V=T\otimes \Q_p$.
 We denote the tower of global fields by capital letters: $K_n:=\Q(\zeta_{p^{n+1}})$, and $K_\infty:=\bigcup_nK_n$.
 
 Let $\gm_n$ be the maximal ideal of $\Z_p[\zeta_{p^{n+1}}]$.
%From Honda's theory of formal groups, there is an isomorphism $\F\rightarrow \widehat{E}$ (cf. Corollary 8.5 in \cite{kobayashi}). We can thus r
Regard the (formal) group $\widehat{E}(\gm_n)$ as a subgroup of $H^1(k_n,T)$ by the Kummer map. The cup product then gives us a pairing

\[(\,,\,)_n:\widehat{E}(\gm_n) \times H^1(k_n,T)\rightarrow H^2(k_n,\Z_p(1))\cong \Z_p.\]

For $x_n\in\widehat{E}(\gm_n)$, we now define a Galois equivariant morphism \[P_{x_n}:H^1(k_n,T)\rightarrow \Z_p[\G_n]\text{ by }z\mapsto \sum_{\sigma \in \G_n} (x_n^\sigma,z)_n\sigma.\]

\begin{theorem}[(Kobayashi \cite{kobayashi}, Section 8.4)] There are two generators $\delta_n^1 $ and $\delta_n^0$ for $\widehat{E}(\gm_n)$ as a $\Z_p[\G_n]$-module.
\end{theorem}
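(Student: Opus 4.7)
The plan is to invoke Honda theory for the formal group $\widehat{E}$, produce explicit candidates $\delta_n^0, \delta_n^1$, and then verify generation via a rank comparison and a Nakayama-type reduction mod $p$. Since $p\mid a_p$, $\widehat{E}$ has height $2$, so its logarithm $\log_{\widehat{E}}$ is governed by the Euler polynomial $X^2 - a_p X + p$: concretely, $\log_{\widehat{E}}$ satisfies a $\varphi$-functional equation whose characteristic polynomial is this Euler factor. The resulting recursion on compatible systems of points (built from $\zeta_{p^{n+1}}-1$ via $\log_{\widehat{E}}^{-1}$) splits naturally into two intertwined subsequences, one for each root of $X^2 - a_p X + p$, and I would take $\delta_n^0, \delta_n^1$ to be the levels at $n$ of these two sequences. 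When $a_p=0$ this recovers Kobayashi's $c_n^{\pm}$ construction.

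To prove
\[ \widehat{E}(\gm_n) \;=\; \Z_p[\G_n]\cdot\delta_n^0 \,+\, \Z_p[\G_n]\cdot\delta_n^1, \]
I would first note $[k_n:\Q_p] = (p-1)p^n = |\G_n|$, so the formal logarithm yields an isomorphism $\widehat{E}(\gm_n)\otimes\Q_p \xrightarrow{\sim} k_n$ of $\Q_p[\G_n]$-modules and the inclusion has finite cokernel. By Nakayama, it then suffices to show that the mod-$p$ images of $\delta_n^0, \delta_n^1$ generate $\widehat{E}(\gm_n)/p$ as a module over $\mathbb{F}_p[\G_n]$. Under Honda theory the mod-$p$ reduction of $\log_{\widehat{E}}$ is very explicit, and the generation statement reduces to verifying that a certain $2\times 2$ matrix over $\mathbb{F}_p[\G_n]$, with entries built from the mod-$p$ reductions of $\Phi_{n+1}$-type polynomials and $a_p$, has unit determinant.

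The hard part will be precisely this mod-$p$ determinant verification. For $a_p=0$, Kobayashi's explicit $\omega_n^{\pm}$ decomposition trivializes it: the two sequences decouple, the relevant matrix is diagonal up to units, and one reads off the determinant directly. For $a_p\neq 0$, however, $X^2-a_p X + p$ does not factor over $\Z_p$, the two Honda recursions are coupled through $a_p$, and the matrix entries no longer have transparent Newton polygons. One must carry out a careful Newton polygon and Weierstrass preparation analysis to pin down the $p$-adic valuations of the matrix entries and extract a unit determinant modulo $p$. This is the technical content of Section 8.4 of \cite{kobayashi}, and the main obstacle I anticipate is tracking the $a_p$-dependence through this Honda-theoretic bookkeeping while preserving the unit-determinant conclusion at every level $n$.
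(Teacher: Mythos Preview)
The paper does not supply its own proof of this theorem; it simply cites Kobayashi, and the sentence immediately following the statement tells you what the two generators actually are: ``Kobayashi named these elements $c_n$ and $c_{n-1}$.'' This already flags the main conceptual error in your proposal.

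Your plan is to produce \emph{two separate} norm-compatible sequences, ``one for each root of $X^2-a_pX+p$,'' and take the level-$n$ terms as $\delta_n^0,\delta_n^1$. But in the supersingular case $X^2-a_pX+p$ is irreducible over $\Q_p$ (its roots have $p$-adic valuation $\tfrac12$ when $a_p=0$, and in general are conjugate in the unramified quadratic extension), so there is no way to split the Honda recursion into two $\Z_p$-integral eigensequences. Kobayashi does something different: he builds a \emph{single} sequence $(c_n)_n$ with $c_n\in\widehat E(\gm_n)$, essentially by applying $\exp_{\widehat E}$ to a carefully chosen combination of the $\zeta_{p^{i+1}}$, arranged so that the trace relation
\[
\Tr_{k_n/k_{n-1}}(c_n)\;=\;a_p\,c_{n-1}-c_{n-2}
\]
holds. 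The two generators at level $n$ are then the \emph{consecutive} terms $c_n$ and $c_{n-1}$ of this one sequence (the latter viewed in $\widehat E(\gm_n)$ via the inclusion $\gm_{n-1}\subset\gm_n$). The ``two'' in the statement reflects the second-order nature of the recursion, not a splitting into eigencomponents.

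Your surrounding strategy---Honda theory to control $\log_{\widehat E}$, Nakayama to reduce generation to a mod-$p$ statement, then an explicit computation---is exactly what Kobayashi does, and it works uniformly for all supersingular $a_p$; the $a_p\neq 0$ case is not harder \emph{for this particular lemma}. The mod-$p$ step is not a $2\times 2$ determinant over $\mathbb F_p[\G_n]$ but rather the observation that, modulo $p$, the formal group becomes isomorphic to a Lubin--Tate group of height $2$, where $\widehat E(\gm_n)/p$ is visibly cyclic over $\mathbb F_p[\G_n]$ once one identifies its structure via the Honda isomorphism; the second generator $c_{n-1}$ is needed because $c_n$ alone generates only after inverting $p$, and the trace relation pins down exactly what is missing integrally. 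So: keep your Nakayama/Honda framework, but replace the ``two-root splitting'' by the single recursive sequence and its shift.
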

Kobayashi named these elements $c_n$ and $c_{n-1}$ (\cite[Section 2]{shuron} explains our notation).

\begin{definition}\label{Hn}Put $A:=\links a_p & 1 \\ -1 & 0 \rechts,$ $  \CCC_i:=\links a_p & 1 \\ -\Phi_{i}(1+X) & 0 \rechts,$ $\H_n:=-\CCC_1\cdots \CCC_nA^{-1}$ for $n>0$ and $\H_0:=-A^{-1}$. We denote right-multiplication with the matrix $\H_n$ by $h_n$.% : \[\Lambda_n\oplus \Lambda_n \xrightarrow{h_n} (\Lambda_n\oplus \Lambda_n)\H_n\subset \Lambda_n\oplus \Lambda_n.\]
\end{definition}

\theorem\label{colemanmap} \cite[Proposition 5.3]{shuron} There is a well-defined map $\col_n:H^1(k_n,T)\rightarrow L_n:= \frac{\Lambda_n \oplus \Lambda_n}{\ker h_n}$ so that
\[\xymatrix {H^1(k_n,T)\ar@/^8mm/@{->}[0,2]^{\hspace{2.5mm}P_{\delta_n^1}, P_{\delta_n^0}}\ar[r]^{\quad\col_n}& L_n \ar@{^(->}[r]^{h_n} &{\Lambda_n \oplus \Lambda_n}
}\text{ commutes.}
\] 
\theorem\label{compatibility}\cite[Corollary 5.6, Limit Proposition 5.7, Definition 7.1]{shuron}  The maps $(\col_n)_n$ form an inverse system with respect to the corestriction maps $H^1(k_{n+1},T)\rightarrow H^1(k_n,T)$ and are compatible with the projection maps $L_{n+1}\rightarrow L_n$. In the inverse limit, we have $\varprojlim_n L_n \cong \Lambda \oplus \Lambda$, so there is a splitting $\varprojlim_n \col_n =: (\col_\sharp, \col_\flat)$.

\begin{definition} Let $\gp_n$ be the place in $K_n$ and $\gp$ the place in $K_{\infty}$ above $p$ and let $E^{\sharp}_{\infty,\gp}$ (resp. $E^{\flat}_{\infty,\gp})$ be the exact annihilator of $\ker \col^{\sharp}$ (resp. $\ker \col^{\flat})$ under the local Tate pairing \[ {\displaystyle\varprojlim_nH^1(K_{n,\gp_n},T)\times \varinjlim_n H^1(K_{n,\gp_n},V/T)\rightarrow \Q_p/\Z_p}.\] %Note that these two annihilators are both inside $E(K_{\infty,\gp})\otimes \Q_p/\Z_p$ (which is the exact annihilator of $E(K_{\infty,\gp})\otimes \Z_p$), because $\displaystyle \varprojlim_m E(K_{n,\gp_n})^{\oplus 2}/p^m \subset \ker \col_n$ by construction.
\end{definition}

\begin{definition}We define $\X^{*}(E/K_\infty)=\Hom(\Sel^{*}(E/K_\infty),\Q_p/\Z_p)$ for $*\in\{\sharp,\flat\}$, where 
\[\Sel^{\sharp}(E/K_\infty):=\ker\left(\Sel(E/K_\infty)\longrightarrow \frac{E(K_{\infty,\gp})\otimes\Q_p/\Z_p}{E^{\sharp}_{\infty,\gp}}\right), \text{and}\]
\[\Sel^{\flat}(E/K_\infty):=\ker\left(\Sel(E/K_\infty)\longrightarrow \frac{E(K_{\infty,\gp})\otimes\Q_p/\Z_p}{E^{\flat}_{\infty,\gp}}\right).\]
\end{definition}
\begin{theorem} (\cite{kobayashi} for $a_p=0$, \cite{shuron} for general $a_p$) $\X^{*}(E/K_\infty)$ is $\Lambda$-cotorsion for at least one of $*\in\{\sharp,\flat\}$ when $a_p \neq 0$. When $a_p=0$, they both are.
\end{theorem}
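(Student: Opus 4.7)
The plan is to exhibit, for each relevant $*\in\{\sharp,\flat\}$, a non-zero element of $\Lambda$ that annihilates $\X^{*}(E/K_\infty)$ modulo a pseudo-null quotient, thereby forcing $\X^{*}$ to be $\Lambda$-cotorsion. The key input will be Kato's Euler system together with the Coleman maps $\col^{\sharp}, \col^{\flat}$ furnished by Theorem \ref{compatibility}.

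First I would invoke Poitou--Tate duality for $T$ over $K_\infty$, with ramification set containing $p$ and the bad primes of $E$. This embeds $\X^{*}(E/K_\infty)$ in an exact sequence whose other terms are $\varprojlim_n H^1(K_n,T)$ globally, the local cohomology at $\gp$ modulo the exact annihilator $E^{*}_{\infty,\gp}=(\ker\col^{*})^{\perp}$, and cotorsion-of-finite-type contributions from the bad places. Under these identifications, the standard Euler-system machinery reduces $\Lambda$-cotorsion of $\X^{*}$ to the assertion that $\col^{*}(\mathrm{loc}_\gp(\varprojlim_n H^1(K_n,T)))$ is a non-zero ideal of $\Lambda$.

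Next I would instantiate this with Kato's zeta element $z_\infty \in \varprojlim_n H^1(K_n,T)$ and form
\[
(L_p^{\sharp}, L_p^{\flat}) := \bigl(\col^{\sharp}(\mathrm{loc}_\gp(z_\infty)),\; \col^{\flat}(\mathrm{loc}_\gp(z_\infty))\bigr) \in \Lambda \oplus \Lambda.
\]
Kato's explicit reciprocity law identifies the pair $(P_{\delta_n^1}(\mathrm{loc}_\gp(z_\infty)), P_{\delta_n^0}(\mathrm{loc}_\gp(z_\infty)))_n$ with the data of the classical $p$-adic $L$-function $L_p(E)$, and by Theorem \ref{compatibility} this pair factors through $(\col^{\sharp}, \col^{\flat})$ via the matrix $\H_n$ of Definition \ref{Hn} in the inverse limit. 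Since $L_p(E)\neq 0$ by the combination of Kato's reciprocity and Rohrlich's non-vanishing, the pair $(L_p^{\sharp}, L_p^{\flat})$ is non-zero, yielding the ``at least one'' assertion for arbitrary $a_p$. When $a_p=0$, Pollack's decomposition of $L_p(E)$ into two algebraically independent half-logarithms forces each of $L_p^{\sharp}$ and $L_p^{\flat}$ to be non-zero individually, giving the stronger ``both'' statement.

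The hard part will be ensuring for $a_p\neq 0$ that not both components collapse simultaneously. Concretely, one must control the factorization $\H_n=-\CCC_1\cdots \CCC_n A^{-1}$ well enough to show that passage from the Perrin-Riou pair to $(\col^{\sharp}, \col^{\flat})$ is injective on the $\Lambda$-submodule generated by the image of Kato's element; this is the structural content packaged for us in \cite{shuron}. Once this injectivity is in hand, the non-vanishing of $L_p(E)$ transfers to the Coleman side, and the global-duality exact sequence of the first step closes the argument.
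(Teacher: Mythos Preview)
The paper does not prove this theorem; it is stated as a citation to \cite{kobayashi} (for $a_p=0$) and \cite{shuron} (for general $a_p$), with no argument supplied here. So there is no ``paper's own proof'' to compare against.

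That said, your outline is broadly the argument carried out in those references. A Poitou--Tate sequence (as in \cite[Proposition 7.19]{shuron} or \cite[\S10]{kobayashi}) reduces cotorsion of $\X^{*}$ to two inputs: (i) cotorsion of the fine Selmer dual $\X_0$, and (ii) non-vanishing of $\col^{*}$ on the image of the global $\mathbf{H}^1(T)$. You have (ii) in view but you should make (i) explicit: it is Kato's theorem that $\mathbf{H}^1(T)$ is free of rank one over $\Lambda$ and that $\X_0$ is $\Lambda$-torsion, and without it the exact sequence does not give cotorsion of $\X^{*}$ from non-vanishing of $\col^{*}(\mathbf{e})$ alone.

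Two smaller points. For $a_p\neq 0$, the ``at least one'' statement does not actually require Rohrlich or the analytic $p$-adic $L$-function: since $\ker(\col^{\sharp},\col^{\flat})$ equals $\varprojlim_n E(k_n)\otimes\Z_p$ (the inverse-limit form of Proposition~\ref{auxiliary}), and since Kato's element has non-torsion localization, the pair $(\col^{\sharp}(\mathbf{e}),\col^{\flat}(\mathbf{e}))$ is automatically non-zero. Your route through $L_p(E)\neq 0$ works too but imports more than needed. For $a_p=0$, the reason both $L_p^{\pm}$ are non-zero is not quite ``algebraic independence of half-logarithms'': rather, each of Kobayashi's $\col^{\pm}$ has kernel $E^{\mp}$, and $E^{+}\cap E^{-}$ is the $\Lambda$-torsion part of the local points, so a non-torsion local class cannot lie in both kernels; alternatively one uses that $L_p^{\pm}$ interpolate $L$-values that Rohrlich shows are non-zero. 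Either way, your invocation of Pollack is pointing at the right phenomenon but the mechanism is slightly misdescribed.
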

\begin{conjecture} (\cite[Conjecture 7.15]{shuron}) $\X^{*}(E/K_\infty)$ are both $\Lambda$-cotorsion for $*\in\{\sharp,\flat\}$.
\end{conjecture}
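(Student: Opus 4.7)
The natural strategy is to deduce $\Lambda$-cotorsion of both $\X^\sharp$ and $\X^\flat$ from non-vanishing of two integral $p$-adic $L$-functions, combined with the divisibility half of the Iwasawa Main Conjecture recorded in \cite[Theorem~1.4]{shuron}. I would apply $\col^\sharp$ and $\col^\flat$ to the localization at $p$ of Kato's zeta element $z_{\mathrm{Kato}}\in\varprojlim_n H^1(K_n,T)$ to obtain elements $L_p^\sharp, L_p^\flat\in\Lambda$; by an explicit reciprocity law these should agree with the two integral $p$-adic $L$-functions constructed in the twin paper \cite{sprung}.

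Each $L_p^*$ interpolates the complex values $L(E,\chi^{-1},1)$ up to explicit nonzero factors, as $\chi$ ranges over Dirichlet characters of $p$-power conductor. Rohrlich's theorem \cite{rohrlich} guarantees that all but finitely many such values are nonzero. If one can show that infinitely many of these interpolation values survive in each of the two integral components---i.e.\ that neither $L_p^\sharp$ nor $L_p^\flat$ is the zero series in $\Lambda$---then the divisibility $(L_p^*)\subseteq \mathrm{char}_\Lambda(\X^*)$ immediately forces $\X^*$ to be $\Lambda$-torsion, at least under the surjectivity hypothesis on $\Gal(\overline{\Q}/\Q)\to\GL_{\Z_p}(T)$ appearing in \cite[Theorem~1.4]{shuron}; removing that hypothesis would require a standard descent through an auxiliary representation.

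\textbf{Main obstacle.} The crux is precisely verifying the non-vanishing of both integral $L$-functions. When $a_p=0$ this is clean via Pollack's $L^\pm$ decomposition, which is essentially how Kobayashi obtained the cotorsion statement for both $\sharp$ and $\flat$ in that case. When $a_p\neq 0$ the integral splitting is subtler: the matrix $\H_n$ of Definition~\ref{Hn} distributes the logarithm-type denominators of the non-integral $p$-adic $L$-function across the $\sharp$ and $\flat$ components in an intertwined way, and one must check that the interpolation formula for each component still meets infinitely many nonzero complex $L$-values. This is essentially an asymptotic analysis of $\H_n$ paired against the local cohomology generators $\delta_n^1,\delta_n^0$ used to define $\col_n$, and is the step on which the conjecture ultimately rests.
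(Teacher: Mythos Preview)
The statement you are attempting to prove is recorded in the paper as a \emph{conjecture}, not a theorem: the paper offers no proof and explicitly attributes it to \cite[Conjecture~7.15]{shuron}. Indeed, the theorem immediately preceding it asserts only that \emph{at least one} of $\X^\sharp,\X^\flat$ is $\Lambda$-torsion when $a_p\neq 0$ (both when $a_p=0$), which is precisely the gap the conjecture is meant to fill. So there is no proof in the paper to compare your proposal against.

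As a strategy, your outline is the standard one and is essentially how the known cases are handled: apply $\col^\sharp,\col^\flat$ to Kato's zeta element, identify the images with integral $p$-adic $L$-functions, and use Kato's divisibility $\cha\X^*\supset(L_p^*)$ together with $L_p^*\neq 0$ to force $\X^*$ to be torsion. You have correctly isolated the obstruction: when $a_p\neq 0$, one must show that \emph{both} $L_p^\sharp$ and $L_p^\flat$ are nonzero, and the matrix $\H_n$ mixes the two components in a way that Rohrlich's non-vanishing for the classical $L$-values does not straightforwardly untangle. This is exactly the point at which the argument is currently stuck; your proposal does not resolve it but rather restates it. In other words, what you have written is a faithful reduction of the conjecture to the non-vanishing of both integral $p$-adic $L$-functions, which is itself open in the $a_p\neq 0$ case---so the proposal is not a proof, but a correct diagnosis of where the difficulty lies.
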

\begin{definition} Let $\eta:\Delta\rightarrow \Z_p^\times$ be a character. For a $\Z_p[\Delta]$-module $M$, let $M^\eta$ be the $\eta$-component of $M$, i.e.
\[M^\eta=\varepsilon_\eta M,\text{ where } \varepsilon_\eta=\frac{1}{\#\Delta}\sum_{\tau \in \Delta}\eta^{-1}(\tau)\tau.\] 
\end{definition}
\begin{definition}
Let $*\in\{\sharp,\flat\}$. If $\X^*(E/K_\infty)^\eta$ is not finitely generated $\Z_p[[X]]$-torsion, we define $\mu_*^\eta:=\infty$. If $\X^*(E/K_\infty)^\eta$ is finitely generated $\Z_p[[X]]$-torsion, then we define $\mu_*^\eta$ as its Iwasawa $\mu$-invariant. In this case, we similarly define $\lambda_*^\eta$ as its Iwasawa $\lambda$-invariant.  
\end{definition}

\textup{It is these Iwasawa invariants that we will now use to estimate the growth of the \v{S}afarevi\v{c}-Tate groups in our towers of number fields.}

\remark In \cite{shuron}, $h_n$ is called $h_n^1$. Also, from the viewpoint of functional equations, it is more correct to look at completed versions of $\H_n$: $\widehat{\H_n}:=-\widehat{C_1}\cdots\widehat{C_n}A^{-1},$ where $\widehat{C_i}$ is $C_i$ with the lower left entry $-\widehat{\Phi_i(1+X)}=-\Phi_i(1+X)(1+X)^{\frac{-1}{2}p^{i-1}(p-1)}$. These matrices give rise to completed maps $\widehat{\col_n}, \widehat{\col_\sharp},$ and $\widehat{\col_\flat}$, and Selmer group duals $\widehat{\X_\sharp}$ and $\widehat{\X_\flat}$, and to a (completed) formulation of the supersingular Iwasawa main conjecture as in \cite{shuron}, but the $\mu-$ and $\lambda-$invariants of (the isotypical components of) $\widehat{\X_*}$  and ${\X_*}$ are the same for $*\in\{\sharp,\flat\}$, since $\widehat{\H_n}$ and $\H_n$ have the same \textit{valuation matrices} (defined in Definition \ref{valuationmatrix}). %This follows e.g. from \cite{sprung}(Lemma NUMBER), so we have decided to work with non-completed Coleman maps for simplicity.
\end{section}

\begin{section}{The Main Ideas}\label{themainideas}
\begin{subsection}{The Selmer group makes the \v{S}afarevi\v{c}-Tate group grow}
To get a handle on the \v{S}afarevi\v{c}-Tate group, we use the following well-known exact sequence:
\[0 \rightarrow E(K_n)\otimes \Q_p/\Z_p \rightarrow \Sel_p(E/K_n) \rightarrow \Sha (E/K_n)[p^\infty]\rightarrow 0\]
Here, $\Sel_p(E/K)$ and $\Sha(E/K)[p^\infty]$ denote the $p$-primary components of the Selmer group and the \v{S}afarevi\v{c}-Tate group over a number field $K$. We should thus be analyzing the Selmer group, for which we need the following convention:

\begin{definition}Let $M=(M_n)_{n=1,2,..}$ be an inverse system of finitely generated $\Z_p$-modules with transition maps $\pi_n:M_{n}\rightarrow M_{n-1}$. When $\pi_n$ has finite kernel and cokernel, we let the $n$-th \textit{Kobayashi rank} $  \nabla_n M$ be 
\[\nabla_n M :=\text{length}_{\Z_p}(\ker \pi_n)- \text{length}_{\Z_p}(\coker\pi_n)+\dim_{\Q_p}M_{n-1}\otimes \Q_p.\]
\end{definition}

\begin{definition} If $M$ is a finitely generated torsion $\Z_p[[X]]$-module, we define the $n$-th \textit{characteristic rank} as
\[\Upsilon_nM:=(p^n-p^{n-1})\ord_pf_M(\zeta_{p^n}-1),\]
where $f_M$ is a characteristic polynomial of $M$. 
\end{definition}

\remark\label{upsilonisnabla} Let  $M$ be a finitely generated torsion $\Z_p[[X]]$-module. Then for large $n$, the $n$-th characteristic rank $\Upsilon_nM$ equals the $n$-th Kobayashi rank of the inverse system $\left(M\otimes \frac{\Z_p[[X]]}{\omega_n(X)}\right)_n$ with the projection maps.
\proof This is Lemma 10.5 i in \cite{kobayashi}.

Now put \[p^{e_n^\eta}:= \# \Sha(E/K_n)^\eta[p^\infty].\]

Consider the inductive system $(\Sel(E/K_n)^\eta)_n$. Taking its Pontryagin dual, we obtain a projective system $\X^\eta:=(\X(E/K_n)^\eta)_n$. Now it is known that for sufficiently large $n$, the rank of the Mordell-Weil group $E(K_n)$ stabilizes to some $r_\infty^\eta=rk_{\Z_p}E(K_\infty)^\eta$ (e.g. \cite{kato}), so by the well-known exact sequence from above, we have 
\begin{equation}\label{enandselmer}e_n^\eta-e_{n-1}^\eta=\text{length}_{\Z_p}\left(\Sha(E/K_n)^\eta/\Sha(E/K_{n-1})^\eta\right)=\nabla_n\X^\eta-r_\infty^\eta.\end{equation}
\end{subsection}

\begin{subsection}{The $\sharp/\flat$ - Selmer groups make the Selmer group grow}
From now on, we will also need a global object.
\begin{definition}Recall that $K_n= \Q(\zeta_{p^{n+1}})$. We let  $K_{-1}=\Q$. Denoting by $j$ the natural morphism  Spec $K_n \rightarrow \text{Spec } O_{K_n}[\frac{1}{p}]$,  we define $\mathbf{H}^1(T):=\displaystyle \varprojlim_n$ $ H^q(\text{Spec }O_{K_n}[\frac{1}{p}],j_*T)$, where $H^q(\text{Spec }O_{K_n}[\frac{1}{p}],j_*T)$ is the $q$-th \'etale cohomology group.
\end{definition}
We would like to estimate $\nabla_n \X^\eta$, for which we consider the Kobayashi ranks of the following inverse systems:
\begin{definition}$\Xloc(E/K_n):=\coker(\mathbf{H}^1(T)_{\Gamma_n}\rightarrow H^1(k_n,T)/E(k_n)\otimes \Z_p),$ where the map is the quotient of the localization map. We consider the projective system
\[\Xloc:=\left(\Xloc(E/K_n)\right)_n\]
\end{definition}

\begin{definition}The \textit{fine Selmer group} is the following kernel:
\[\Sel_0(E/K_n):=\ker \left( H^1(K_n,E[p^\infty])\rightarrow \prod_v H^1(K_{n,v},E[p^\infty])\right)\]
We denote the inverse system $\varprojlim_n \Hom(\Sel_0(E/K_n), \Q_p/\Z_p)$ by $\X_0$. 

\end{definition}

\begin{propn}\label{kobayashisproposition} (Kobayashi \cite{kobayashi}) For large $n, \nabla_n \X^\eta = \nabla_n \Xloc^\eta + \nabla_n \X_0^\eta$.
\end{propn}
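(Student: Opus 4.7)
The plan is to derive a four-term exact sequence of projective systems relating $\X^\eta$, $\X_0^\eta$, and $\Xloc^\eta$, and then invoke additivity of Kobayashi ranks in short exact sequences. The starting point is global duality: by the Poitou--Tate long exact sequence applied to $T$ and the defining local conditions of $\Sel$ and $\Sel_0$, one obtains at each level $n$ a four-term exact sequence of $\Z_p[\G_n]$-modules
\[
\mathbf{H}^1(T)_{\Gamma_n} \xrightarrow{\,\mathrm{loc}_p\,} H^1(k_n,T)/(E(k_n)\otimes\Z_p) \longrightarrow \X(E/K_n) \longrightarrow \X_0(E/K_n) \longrightarrow 0.
\]
Here the first arrow is localization at the prime above $p$ (composed with the canonical comparison between Iwasawa cohomology coinvariants and level-$n$ global cohomology), the second is dual to the inclusion of the local Kummer image into $H^1(K_{n,\gp_n}, E[p^\infty])$, and the third is dual to $\Sel_0 \hookrightarrow \Sel$. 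Places away from $p$ do not contribute, since at primes of good reduction the local Mordell--Weil group has no $p$-divisible part and the corresponding local terms cancel in Poitou--Tate.

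Next, I would take $\eta$-components, which is exact as $p\nmid \#\Delta$. By definition, the cokernel of $\mathrm{loc}_p$ is exactly $\Xloc(E/K_n)$, so the four-term sequence collapses (after $\eta$-projection) into a short exact sequence of projective systems
\[
0 \to \Xloc(E/K_n)^\eta \to \X(E/K_n)^\eta \to \X_0(E/K_n)^\eta \to 0.
\]
Kobayashi rank is additive on short exact sequences of projective systems whose transition maps have finite kernel and cokernel. For large $n$, each of $\X_0^\eta$, $\X^\eta$, and $\Xloc^\eta$ is such a well-behaved inverse system, thanks to Kato's theorem (for $\X_0$) together with the compatibility of the localization maps with corestriction recorded in Theorem \ref{compatibility} (for $\X$ and $\Xloc$). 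Additivity then yields $\nabla_n \X^\eta = \nabla_n \Xloc^\eta + \nabla_n \X_0^\eta$ for $n \gg 0$.

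The main obstacle is verifying that the transition maps in each of the three projective systems really have finite kernel and cokernel for large $n$, which is required for Kobayashi rank to be additive. This is subtler for $\X$ and $\Xloc$ than for $\X_0$, since in the supersingular case $\X$ itself need not be $\Lambda$-torsion. However, in the ranges where the characteristic rank $\Upsilon_n$ is defined (namely, on the torsion parts of the relevant modules), Remark \ref{upsilonisnabla} applies and the transition maps are adequately controlled, letting the additivity step go through.
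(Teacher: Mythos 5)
Your proposal is correct and follows essentially the same route as the paper: the paper's proof simply cites Kobayashi's four-term exact sequence \cite[(10.35)]{kobayashi} (which is exactly the Poitou--Tate sequence you derive) together with the additivity of Kobayashi ranks \cite[Lemma~10.4~i]{kobayashi}, noting that $\Xloc(E/K_n)$ corresponds to Kobayashi's $\mathcal{Y}'(E/K_n)$. You have reconstructed the content of that citation; the only thing to be a bit careful about is the standing hypothesis for additivity (finite kernel and cokernel of transition maps for $n\gg0$), which, as you observe, is handled for $\X_0$ via Kato's theorem and for $\X$, $\Xloc$ via the compatibility of Coleman and localization maps exactly as in Kobayashi's \S10.
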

\begin{proof}This follows from the exact sequence \cite[(10.35)]{kobayashi} and the additivity of Kobayshi ranks (\cite[Lemma 10.4 i]{kobayashi}) in them. For his notation, note that $\Xloc(E/K_n)$ is $\mathcal{Y}'(E/K_n)$, and cf. \cite[Lemma 10.6]{kobayashi}.
\end{proof}

We won't compute either of the Kobayashi ranks, but decompose this sum into more, difficult summands, which together yield an easy sum.
\begin{definition}Let $[\frac{a}{b}]$ be the greatest integer not greater than $\frac{a}{b}$.
\[q_n^\sharp:=\left[\frac{p^n}{p+1}\right] = p^{n-1}-p^{n-2}+p^{n-3}-p^{n-4}+\cdots +p^2-p \text{ if $n$ is odd.}\]\[
q_n^\flat:=%p^n-p^{n-1}+p^{n-2}-p^{n-3}+\cdots -p^2+p-1\text{ if $n$ is odd},\\
\left[\frac{p^n}{p+1}\right]=p^{n-1}-p^{n-2}+p^{n-3}-p^{n-4}+\cdots +p-1 \text{ if $n$ is even.}\]
We also define $q_n^\sharp:=q_{n+1}^\sharp$ for even $n$, and $q_n^\flat:=q_{n+1}^\flat$ for odd $n$.
\end{definition}

%We denote by $\Lambda/\textrm{\col}_*$ the projective system $(\Lambda_n/im(\textrm{\textrm{\col}}_*))_n$ (OR $\textrm{\col}_n$?? CHECK) for $*=\sharp$ rec. $*=\flat$. ALSO, in what follows stuff is off by 1 for the trivial CHARACTER. put in $^\eta$!!!
\begin{algorithm}[(Modesty Algorithm)]\label{modestyalgorithm}
Let $n$ be a given integer. For a character $\eta:\Delta\to \Z_p^\times$, we choose an element $*\in\{\sharp,\flat\}$ as follows: Let $\mathbf{e}$ be a basis of the rank one $\Lambda$-module $\mathbf{H}^1(T)$. We regard $\mathbf{e}$ as an element of $\mathbf{H}^1_{\Iw}(T)=\varprojlim_n H^1(k_n,T)$ by the localization map. Denote by $\mu(\col_\sharp^\eta)$ the $\mu$-invariant of $\col_\sharp^\eta(\mathbf{e})$ if $\col_\sharp^\eta(\mathbf{e})\neq 0$, and we let $\mu(\col_\sharp^\eta):=\infty$ if $\col_\sharp^\eta(\mathbf{e})= 0$. We define $\mu(\col_\flat^\eta)$ analogously. Now:
\itemize
\item If $a_p=0$ or $\mu(\col_\sharp^\eta)=\mu(\col_\flat^\eta), $ then $*:=\begin{cases}\sharp\text{ if $n$ is odd,}\\ \flat \text{ if $n$ is even}. \end{cases}$

\item If $a_p\neq 0 $ and $\begin{cases}\mu(\col_\sharp^\eta)<\mu(\col_\flat^\eta)\text{, then }*:=\sharp,\\ \mu(\col_\sharp^\eta)>\mu(\col_\flat^\eta)\text{, then }*:=\flat.\end{cases}$
\end{algorithm}
\begin{propn}[(Modesty Proposition)]\label{modestypropn} Given a character $\eta:\Delta\to \Z_p^\times$ and $n\in\N$, pick $*$ according to the Modesty Algorithm \ref{modestyalgorithm}. Let $\mathbf{e}$ be as above. Then  for large $n$, \[\nabla_n \Xloc^\eta=q_n^* + \Upsilon_n (\Lambda/\col_*(\mathbf{e}))^\eta.\]
\end{propn}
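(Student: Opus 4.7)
The plan is to transfer the computation from the cohomological side to an explicit $\Lambda_n$-module computation by means of the Coleman maps $\col_n$, and then to extract the Kobayashi rank using the matrix $\H_n$ together with the valuation-matrix technology promised in Section~\ref{proofofthemodestyproposition}.

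First, I would use Proposition~\ref{auxiliary} (the description of $\coker\col_n$) together with the definition of $E^{\sharp}_{\infty,\gp}$, $E^{\flat}_{\infty,\gp}$ as exact annihilators of $\ker\col^{\sharp}$, $\ker\col^{\flat}$ under local Tate duality to identify, up to a bounded ($n$-independent) error, the quotient $H^1(k_n,T)/E(k_n)\otimes\Z_p$ with the image of $\col_n$ in $L_n$. Thus the $\eta$-isotypical component of $\Xloc(E/K_n)$ becomes, up to bounded error, the cokernel
\[
L_n^\eta \bigm/ \bigl\langle \col_n(\mathbf{e})\bigr\rangle_{\Lambda_n^\eta},
\]
using that $\mathbf{H}^1(T)$ is a free $\Lambda$-module of rank one generated by $\mathbf{e}$. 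Pushing along $h_n$, this target embeds into $(\Lambda_n\oplus\Lambda_n)^\eta$ as the cokernel of the $\Lambda_n^\eta$-submodule generated by $(\col_\sharp(\mathbf{e}),\col_\flat(\mathbf{e})) \bmod \omega_n$ together with $\im h_n$.

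Next, I would compute $\nabla_n$ of this cokernel via Remark~\ref{upsilonisnabla}, working character-by-character. At each character $\chi$ of $\Gamma_n$ of exact order $p^i$, all factors $\CCC_j(\chi)$ for $j\neq i$ are invertible while $\CCC_i(\chi)$ is rank-one degenerate; the product structure $\H_n = -\CCC_1\cdots\CCC_n A^{-1}$ therefore produces, via the valuation matrix (Definition~\ref{valuationmatrix}), an explicit record of the $p$-adic valuations of the entries of $\H_n(\chi)$. Inductively on $n$, these valuations should exhibit an alternating pattern along the two coordinates of $\Lambda_n\oplus\Lambda_n$ whose partial sums over $i\le n$ are exactly $q_n^\sharp$ in one coordinate and $q_n^\flat$ in the other.

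Combining the two steps, the length of the $\eta$-component of the cokernel splits, up to bounded error, as a sum of two contributions: one from the generator $\col_*(\mathbf{e})$ in the \emph{dominant} coordinate (which yields $\Upsilon_n(\Lambda/\col_*(\mathbf{e}))^\eta$ by Remark~\ref{upsilonisnabla}) and one coming from the \emph{subordinate} coordinate (which yields exactly $q_n^*$). Which coordinate is dominant is a Newton-polygon consideration: when $\mu(\col_\sharp^\eta) \neq \mu(\col_\flat^\eta)$, the coordinate with the smaller $\mu$-invariant divides into the matrix entries first and hence dominates the image, forcing $*$ to be the index minimising $\mu_\sharp,\mu_\flat$; when they coincide---in particular when $a_p=0$ and the two columns of each $\CCC_i$ essentially decouple---the alternating appearance of $\Phi_i$'s in the valuation matrix picks $*$ by the parity of $n$, matching the cases in the Modesty Algorithm.

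The main obstacle will be the combinatorial bookkeeping inside the second step: I must show \emph{precisely} that the valuation matrix of $\H_n$ at cyclotomic characters has the exact alternating shape that produces the stated $q_n^\sharp$, $q_n^\flat$, and moreover that in the case $a_p\neq 0$ the off-diagonal coupling in $\CCC_i$ does not spoil the pattern. This is essential because merely taking $\det\H_n$ would only give the total valuation $q_n^\sharp + q_n^\flat$ and entirely hide the modesty phenomenon; it is the refined row-by-row (rather than determinantal) accounting that forces the image to select the coordinate with the smaller $\mu$-invariant.
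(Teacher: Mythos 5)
Your high-level reduction is the right one—use Proposition~\ref{auxiliary} to pass from $\Xloc$ to a Coleman-map cokernel, then estimate Kobayashi ranks by evaluating at $\zeta_{p^n}-1$ via the valuation matrix of $\H_n$, with the modesty phenomenon arising because the term of smaller $p$-adic valuation (hence smaller $\mu$-invariant) dominates. Your observation that the determinant of $\H_n$ alone would only yield $q_n^\sharp+q_n^\flat$ and obscure the modesty is correct and is exactly why a column-level (not determinantal) analysis is needed; that is what the lemma on $\ord_p(\H_n(\zeta_{p^n}-1))$ supplies.

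However, the mechanism you propose for extracting the two contributions does not hold up. You claim the length of the cokernel ``splits, up to bounded error, as a sum of two contributions'' from a dominant and a subordinate coordinate of $\Lambda_n\oplus\Lambda_n$. This coordinate-wise split is not well-posed: under $h_n$ the generator maps to the pair $\bigl(f_\sharp\omega_n^\sharp+f_\flat\omega_n^\flat,\;\Phi_n(f_\sharp\omega_{n-1}^\sharp+f_\flat\omega_{n-1}^\flat)\bigr)$, so \emph{both} $f_\sharp$ and $f_\flat$ enter \emph{each} coordinate, and $L_n$ is only a proper submodule of $\Lambda_n\oplus\Lambda_n$. The paper does the opposite of splitting: it \emph{collapses} both coordinates into one via $[+]_u\circ h_n\colon L_n\to\Lambda_n$, producing the single-generator module $M_u=\Lambda/(\omega_n^\sharp f_\sharp+u\omega_n^\flat f_\flat+\Phi_n\omega_{n-1}^\sharp f_\sharp+u\Phi_n\omega_{n-1}^\flat f_\flat)$; evaluating at $\zeta_{p^n}-1$ kills the $\Phi_n$ terms, and the valuation of the surviving sum $\omega_n^\sharp(\zeta_{p^n}-1)f_\sharp(\zeta_{p^n}-1)+u\omega_n^\flat(\zeta_{p^n}-1)f_\flat(\zeta_{p^n}-1)$ is governed by the smaller summand. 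The modesty comes from taking a \emph{sum}, not a direct sum. Moreover, your repeated appeal to ``bounded error'' is inadequate for Kobayashi ranks, since $\nabla_n$ is itself a first difference and a bounded oscillating correction would not cancel; the paper instead tracks everything exactly, via the short exact sequence $0\to\Xloc(E/K_n)\to\X_{\col,n}\to\Z_p[\Delta]\to 0$ (from the explicit $\coker\col_n\cong\Z_p[\Delta]$) and a four-term exact sequence $0\to\ker([+]_u\circ h_n)_n\to\X_{\col,n}\to M_{n,u}\to\Lambda_n^\natural\to 0$, together with additivity of Kobayashi ranks, so that the two $\pm1$ corrections cancel and $\nabla_n\Xloc^\eta=\nabla_nM^\eta$. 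Making $[+]_u\circ h_n$ usable also requires choosing $u$ so that $\im\col_n\cap\ker([+]_u\circ h_n)_n=0$; this is a nontrivial lemma about independence of Kobayashi's generators $\delta_n^1,\delta_n^0$ that your outline does not account for.
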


\begin{proof}We will devote the next section to the proof.
\end{proof}

\begin{propn}\label{above} Given $\eta:\Delta\to \Z_p^\times$ and $n\in\N$,  pick $*$ according to the Modesty Algorithm \ref{modestyalgorithm} and let $\mathbf{e}$ be as above. Then \[\Upsilon_n \X_*^\eta = \begin{cases}\Upsilon_n (\Lambda/\col_*(\mathbf{e}))^\eta + \Upsilon_n \X_0^\eta &\text{ if } \eta\text{ is trivial or } a_p\neq0,\\ \Upsilon_n (\Lambda/\col_*(\mathbf{e}))^\eta + \Upsilon_n \X_0^\eta-1&\text{ if } \eta\text{ is nontrivial and } a_p=0.\end{cases}\]
\end{propn}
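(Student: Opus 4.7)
The plan is to derive a four-term exact sequence of $\Lambda$-modules relating $\X_*$, $\X_0$, and $\Lambda/\col_*(\mathbf{e})\Lambda$, and then take $\eta$-components and apply additivity of the characteristic rank $\Upsilon_n$. The starting point is Poitou--Tate global duality applied to the modified Selmer condition. By construction, the inclusion $\Sel_0(E/K_\infty)\subset\Sel^*(E/K_\infty)$ has quotient equal to the image of the localization map $\Sel^*\to E^*_{\infty,\gp}$. Using that $E^*_{\infty,\gp}$ is by definition the exact annihilator of $\ker\col_*$ under the local Tate pairing, this quotient is identified Pontryagin-dually with a quotient of $\Lambda$ determined by applying $\col_*$ to the image of the global Iwasawa cohomology $\mathbf{H}^1(T)$ inside the local Iwasawa cohomology $\varprojlim_n H^1(k_n,T)$.

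Next, I would use Kato's theorem, which says $\mathbf{H}^1(T)$ is a torsion-free $\Lambda$-module of rank one, so that $\mathbf{H}^1(T)=\mathbf{e}\cdot\Lambda$ and its image under $\col_*$ is the principal ideal $\col_*(\mathbf{e})\Lambda$. Combined with the Poitou--Tate sequence this yields, up to pseudo-null $\Lambda$-modules, a short exact sequence
\[0 \to \Lambda/\col_*(\mathbf{e})\Lambda \to \X_* \to \X_0 \to 0.\]
Since $p\nmid \#\Delta$, taking $\eta$-components is exact, and since characteristic ideals are multiplicative on short exact sequences of finitely generated torsion $\Z_p[[X]]$-modules, $\Upsilon_n$ is additive on such sequences for $n$ large enough that the pseudo-null contribution is absorbed (by Remark \ref{upsilonisnabla}). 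This directly gives $\Upsilon_n\X_*^\eta = \Upsilon_n(\Lambda/\col_*(\mathbf{e}))^\eta + \Upsilon_n\X_0^\eta$, which is the first case of the formula.

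The main obstacle is to establish the $-1$ correction when $\eta$ is nontrivial and $a_p=0$. In that special range, when one unravels the Coleman maps via the matrix $\H_n$ of Definition \ref{Hn}, the matrix $\CCC_i$ degenerates (its diagonal entry vanishes), and the identification $E^*_{\infty,\gp}=(\ker\col_*)^\perp$ acquires a one-dimensional defect on a single $\eta$-isotypic component coming from a trivial zero of an Euler factor at $p$. This defect propagates through the Poitou--Tate sequence as an extra cyclic module whose $n$-th characteristic rank is $1$, producing the $-1$. I would verify this via an Euler characteristic computation on the $\eta$-component of the local cohomology, following the strategy of \cite[\S 10]{kobayashi} (where the case $\sharp=-,\flat=+$ with $a_p=0$ is treated). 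The delicate point is to check that the defect occurs \emph{only} when $\eta$ is nontrivial and $a_p=0$, and that otherwise (in particular when $a_p\neq 0$, where the Modesty Algorithm chooses $*$ according to $\min(\mu_\sharp,\mu_\flat)$) no analogous correction appears; this case distinction will have to be made by a direct inspection of the valuation matrices attached to $\H_n$.
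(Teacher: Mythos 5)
The first half of your proposal is essentially the same route the paper takes: the exact sequences from \cite[Prop.\ 7.19]{shuron} and \cite[Prop.\ 10.6]{kobayashi} are obtained by Poitou--Tate, combined with the fact that $\mathbf{H}^1(T)$ is free of rank one (Kato), and the resulting characteristic-ideal identity $\cha\X_*^\eta=\col_*^\eta(\mathbf{e})\cha\X_0^\eta$ gives the formula in the first case via additivity of $\Upsilon_n$. Up to that point your sketch is sound.

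The gap is in the explanation of the $-1$ correction. You attribute it to a ``degeneration of $\CCC_i$'' and ``a trivial zero of an Euler factor at $p$,'' to be detected by an Euler-characteristic computation or by inspecting valuation matrices. None of those is the mechanism, and valuation matrices (Definition \ref{valuationmatrix}) are a tool for estimating $p$-adic valuations of special values in the proof of Proposition \ref{thestuff}; they say nothing about the image of $\col_\sharp$. The actual input, cited from \cite[Prop.\ 7.6]{shuron}, is the explicit computation of the image of the $\sharp$-Coleman map on nontrivial isotypic components: $\im\col_\sharp^\eta=J^\eta$ with $J=(a_p+a_pX+\tfrac{a_p}{p}X^2,\,X)$ for $\eta\neq 1$. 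Thus $\coker\col_\sharp^\eta\cong\Lambda^\eta/J^\eta$. When $a_p\neq 0$ (so $\ord_p(a_p)=1$), $J^\eta=(a_p,X)$ and this cokernel is finite, hence pseudo-null and invisible in $\Upsilon_n$. When $a_p=0$, $J^\eta=(X)$, so the cokernel is $\Lambda^\eta/X$ with $\mu=0$, $\lambda=1$, contributing exactly $\Upsilon_n(\Lambda^\eta/X)=1$; this is the $-1$. Note also that this only affects $*=\sharp$ (the proof has $\cha\X_\flat^\eta=\col_\flat^\eta(\mathbf{e})\cha\X_0^\eta$ with no division by $X$, since $\col_\flat^\eta$ is surjective), whereas your description of a ``defect'' in $E^*_{\infty,\gp}=(\ker\col_*)^\perp$ does not distinguish $\sharp$ from $\flat$. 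Without the explicit description of $\im\col_\sharp^\eta$ your argument cannot locate the correction, cannot show it is confined to $\sharp$, and cannot rule out an analogous term when $a_p\neq 0$.
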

\begin{proof} Note that we have chosen $*$ so that $\X_*^\eta$ is a finitely generated torsion $\Z_p[[X]]$-module. From the exact sequences \cite[in Proposition 7.19]{shuron}, \cite[Proposition 10.6 ii]{kobayashi}, and the fact (\cite[Proposition 7.6]{shuron}) that $\im\col_\sharp^\eta=J^\eta$, where $J=(a_p+a_pX+\frac{a_p}{p} X^2,X)\subset\Lambda$  for $\eta \neq 1$, \[\cha\X_\flat^\eta=\col_\flat^\eta(\mathbf{e})\cha\X_0^\eta,\]
\[\cha\X_\sharp^\eta=\begin{cases}\frac{\col_\sharp^\eta(\mathbf{e})}{X}\cha\X_0^\eta & \text{ if } \eta \neq 1\text{ and } a_p=0,\\\col_\sharp^\eta(\mathbf{e})\cha\X_0^\eta & \text{ if } \eta = 1\text{ or } a_p\neq 0.\end{cases}\]
\end{proof}

\begin{corollary}\label{almostmaintheorem}Given $\eta:\Delta\to \Z_p^\times$ and $n\in \N$, pick $*$ according to the Modesty Algorithm \ref{modestyalgorithm}. Then for large $n$, \[\nabla_n\X^\eta=\begin{cases}q_n^*+\Upsilon_n\X_*^\eta&\text{ if }a_p\neq0\text{ or }\eta= 1, \\ q_n^*+\Upsilon_n\X_*^\eta+1&\text{ if }a_p=0\text{ and }\eta\neq 1. \end{cases}\]
\end{corollary}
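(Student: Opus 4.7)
The plan is to assemble Propositions~\ref{kobayashisproposition}, \ref{modestypropn}, and \ref{above} essentially mechanically, with Remark~\ref{upsilonisnabla} serving as glue. First I would invoke Kobayashi's Proposition~\ref{kobayashisproposition} to split
\[\nabla_n\X^\eta \;=\; \nabla_n\Xloc^\eta \,+\, \nabla_n\X_0^\eta\]
for $n$ sufficiently large. This separates the localization contribution from the fine Selmer contribution.

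Second, I would substitute the Modesty Proposition~\ref{modestypropn} into the first summand, turning it into $q_n^* + \Upsilon_n(\Lambda/\col_*(\mathbf{e}))^\eta$. This step is what produces the $q_n^*$ that appears in both cases of the corollary; it also re-expresses the localization piece as a characteristic rank, putting it on the same footing as the quantities appearing on the right-hand side of the statement.

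Third, I would pass the second summand from a Kobayashi rank into a characteristic rank by invoking Remark~\ref{upsilonisnabla}: since $\X_0^\eta$ is a finitely generated torsion $\Z_p[[X]]$-module, for large $n$ we have $\nabla_n\X_0^\eta = \Upsilon_n\X_0^\eta$. Once this is done, the sum $\Upsilon_n(\Lambda/\col_*(\mathbf{e}))^\eta + \Upsilon_n\X_0^\eta$ is precisely the combination handled by Proposition~\ref{above}, which collapses it into $\Upsilon_n\X_*^\eta$ in the generic case, and into $\Upsilon_n\X_*^\eta + 1$ in the exceptional case $a_p=0$, $\eta\neq 1$. Reading off the two cases reproduces the statement of the corollary.

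The only point that is not pure bookkeeping is verifying the torsion hypotheses underlying the use of Remark~\ref{upsilonisnabla} and Proposition~\ref{above}: one must know that the Modesty Algorithm~\ref{modestyalgorithm} picks $*$ so that $\col_*^\eta(\mathbf{e})\neq 0$, whence both $(\Lambda/\col_*(\mathbf{e}))^\eta$ and, by the characteristic-polynomial identities in Proposition~\ref{above}, $\X_*^\eta$ are finitely generated torsion $\Z_p[[X]]$-modules; for this, one uses that at least one of $\X_\sharp, \X_\flat$ is $\Lambda$-cotorsion (the theorem following the definition of the modified Selmer groups), combined with the algorithm's choice of the smaller $\mu$-invariant. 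With that in hand, the corollary is a direct substitution.
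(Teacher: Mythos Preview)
Your proof is correct and follows essentially the same route as the paper, which simply assembles Propositions~\ref{kobayashisproposition}, \ref{modestypropn}, and \ref{above} together with the identity $\nabla_n\X_0^\eta=\Upsilon_n\X_0^\eta$ for large $n$. The only small discrepancy is that the paper invokes \cite[Proposition~10.6~ii]{kobayashi} for this last identity rather than Remark~\ref{upsilonisnabla}, since the inverse system defining $\X_0^\eta$ carries the natural Selmer transition maps rather than the projection maps of a fixed module tensored with $\Lambda_m$; this is a citation nuance, not a gap in your argument.
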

\begin{proof} This follows from Proposition \ref{kobayashisproposition}, Proposition \ref{above}, the Modesty Proposition \ref{modestypropn}, and \cite[Proposition 10.6 ii]{kobayashi}, which says that for large $n$, $\nabla_n\X_0^\eta=\Upsilon_n\X_0^\eta$.
\end{proof}
\end{subsection}

\begin{subsection}{The $\sharp/\flat$ - Selmer groups thus make the \v{S}afarevi\v{c}-Tate group grow: Main Theorem}
Note that in the Modesty Algorithm \ref{modestyalgorithm}, we always choose $*$ so that $\X_*^\eta$ is a finitely generated torsion $\Z_p[[X]]$-module. We have $\nabla_n \X_*^\eta=\mu_*^\eta(p^n-p^{n-1})+\lambda_*^\eta$ for large $n$ by \cite[Proposition 10.5.ii]{kobayashi}, where $\mu_*^\eta$ and $\lambda_*^\eta$ are the Iwasawa invariants of the characteristic polynomial of $\X_*^\eta$.

By Corollary \ref{almostmaintheorem} and equation (\ref{enandselmer}) in the proof of Remark \ref{upsilonisnabla}, we obtain our main theorem:

\begin{maintheorem}\label{maintheorem}Let $p$ be an odd prime of good supersingular reduction. Let $\eta:\Delta\rightarrow \Z_p^\times$ be a character. For $*\in\{\sharp,\flat\}$, denote by $\mu_*^\eta$ and $\lambda_*^\eta$ the $\mu-$ and $\lambda-$ invariants of $\X_*^\eta$ when $\X_*^\eta$ is finitely generated torsion as a $\Z_p[[X]]$-module, and set $\mu_*^\eta:=\infty$ if not. We denote by $r_\infty^\eta$ the rank of $E(K_\infty)^\eta$. %, finite by results of Kato and Rohrlich (\cite{kato} and \cite{rohrlich}).
Assume that $\Sha(E/K_n)[p^\infty]$ are finite for all $n$, and put $e_n^\eta=\ord_p(\Sha(E/K_n^\eta)).$ Then for large $n$, we have:
\begin{itemize}
\item
\text{ If $\mu_\sharp^\eta=\mu_\flat^\eta$ or $a_p=0$, then}
\[e_n^\eta-e_{n-1}^\eta=\begin{cases} (p^n-p^{n-1})\mu_\sharp^\eta+\lambda_\sharp^\eta-r_\infty^\eta+\begin{cases} q_n^\sharp &\text{ if $\eta=1$ or $a_p\neq 0$}\\ q_n^\sharp+1&\text{ if $\eta\neq1$ and $a_p=0$} \end{cases}\text{ if $n$ is odd},\\(p^n-p^{n-1})\mu_\flat^\eta+\lambda_\flat^\eta-r_\infty^\eta+q_n^\flat \text{ if $n$ is even.}\end{cases}\]
\item
\text{ If $\mu_\sharp^\eta<\mu_\flat^\eta $ and $a_p \neq 0$, then }
\[e_n^\eta-e_{n-1}^\eta=(p^n-p^{n-1})\mu_\sharp^\eta+\lambda_\sharp^\eta-r_\infty^\eta+q_n^\sharp.\]
\item
\text{ If $\mu_\sharp^\eta>\mu_\flat^\eta $ and $a_p \neq 0$, then}
\[e_n^\eta-e_{n-1}^\eta=(p^n-p^{n-1})\mu_\flat^\eta+\lambda_\flat^\eta-r_\infty^\eta+q_n^\flat.\]
\end{itemize}

\end{maintheorem}
\end{subsection}
\end{section}

\begin{section}{Proof of the Modesty Proposition}\label{proofofthemodestyproposition}
We now prove the Modesty Proposition \ref{modestypropn}. Subsection \ref{interestingcalculations}. contains the most important idea, that of valuation matrices.
Given a positive integer $n$, let $\Phi_n=\Phi_n(1+X)$ and explicitly write the elements of $\H_n$ from Definition \ref{Hn} as:
\[\links \omega_n^\sharp & \Phi_n\omega_{n-1}^\sharp \\ \omega_n^\flat & \Phi_n\omega_{n-1}^\flat \rechts:=\H_n\]
\begin{definition}Fix a basis $\mathbf{e}$ for the rank one $\Lambda$-module $\mathbf{H}^1(T)$, and denote its image under the localization map by $\mathbf{e}$ as well. Put $f_\sharp:=\col_\sharp(\mathbf{e}), f_\flat:=\col_\flat(\mathbf{e})$.
\end{definition}
\begin{definition}Choose a unit $u\in \Z_p^{\times}.$  We put \[M:=M_u:=\frac{\Lambda}{\omega_n^\sharp f_\sharp+u\omega_n^\flat f_\flat+\Phi_n\omega_{n-1}^\sharp f_\sharp+u\Phi_n\omega_{n-1}^\flat f_\flat},\text{ and } \\ M_{u,m}:=M\otimes \Lambda_m. \]
\end{definition}
\begin{subsection}{Valuation matrices}\label{interestingcalculations}
The goal of this subsection is to prove the following proposition, which computes the Kobayashi ranks of $M$ for large $n$. (They are independent of $u$. That is why we dropped the subscript $u$):
\begin{propn}\label{thestuff}Let $\eta:\Delta\rightarrow\Z_p^\times$ be a character. Then $\nabla_nM^\eta=q_n^*+\nabla_n(\Lambda^\eta/f_*^\eta)$ for large $n$, where $*$ is chosen according to the Modesty Algorithm \ref{modestyalgorithm}.

% $\text{ where $*$ is  }\begin{cases}\sharp \text{, if } \mu^\eta_\sharp=\mu^\eta_\flat \text{ and } n \text{ is odd} \text{ or } \mu^\eta_\sharp<\mu^\eta_\flat, \\
%\flat \text{, if }  \mu^\eta_\flat=\mu^\eta_\sharp \text{ and } n \text{ is even}\text{ or } \mu^\eta_\flat<\mu^\eta_\sharp.\\
%\end{cases} $
\end{propn}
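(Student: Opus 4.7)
The plan is to reduce $\nabla_n M^\eta$ to the characteristic rank $\Upsilon_n M^\eta$ and evaluate the latter at $X=\zeta_{p^n}-1$. Writing
\[ g_n := (\omega_n^\sharp+\Phi_n\omega_{n-1}^\sharp)f_\sharp + u(\omega_n^\flat+\Phi_n\omega_{n-1}^\flat)f_\flat, \]
the module $M^\eta$ is the cyclic quotient $\Lambda^\eta/(g_n^\eta)$, since the entries of $\H_n$ lie in the trivial isotypic component. Once $g_n^\eta\neq 0$ is verified, Remark \ref{upsilonisnabla} identifies $\nabla_nM^\eta$ with $\Upsilon_nM^\eta = (p^n-p^{n-1})\ord_p g_n^\eta(\zeta_{p^n}-1)$ for $n\gg 0$, and likewise $\nabla_n(\Lambda^\eta/f_*^\eta) = (p^n-p^{n-1})\ord_p f_*^\eta(\zeta_{p^n}-1)$. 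The proposition thus reduces to
\[ \ord_p g_n^\eta(\zeta_{p^n}-1) = \frac{q_n^*}{p^n-p^{n-1}} + \ord_p f_*^\eta(\zeta_{p^n}-1). \]

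The crucial simplification is that $\Phi_n(\zeta_{p^n})=0$, so evaluating at $X=\zeta_{p^n}-1$ kills the two $\Phi_n\omega_{n-1}^{\sharp/\flat}$ summands and leaves
\[ g_n^\eta(\zeta_{p^n}-1) = \omega_n^\sharp(\zeta_{p^n}-1)\,f_\sharp^\eta(\zeta_{p^n}-1) + u\,\omega_n^\flat(\zeta_{p^n}-1)\,f_\flat^\eta(\zeta_{p^n}-1). \]
Two tasks remain: (i) to compute the $p$-adic valuations $\ord_p\omega_n^{\sharp/\flat}(\zeta_{p^n}-1)$, and (ii) to rule out cancellation between the two summands. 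For (i), the valuation matrix apparatus of Definition \ref{valuationmatrix} records the $p$-adic valuations of the entries of $\H_n(\zeta_{p^k}-1)$, and propagating these through the matrix recursion extracted from $\H_n=-\CCC_1\cdots\CCC_n A^{-1}$ together with the classical valuations of $\Phi_k(\zeta_{p^n})$ should yield $\ord_p\omega_n^*(\zeta_{p^n}-1) = q_n^*/(p^n-p^{n-1})$ for $*\in\{\sharp,\flat\}$; in the $a_p=0$ case one recovers Pollack's factorization in which one of the two valuations becomes $+\infty$ according to the parity of $n$.

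For (ii), the Modesty Algorithm \ref{modestyalgorithm} is engineered so that $\omega_n^*(\zeta_{p^n}-1)f_*^\eta(\zeta_{p^n}-1)$ carries the strictly smaller $p$-adic valuation for $n\gg 0$, uniformly in the unit $u\in\Z_p^\times$. When $a_p\neq 0$ and $\mu(\col_\sharp^\eta)\neq\mu(\col_\flat^\eta)$, the smaller $\mu$-invariant of $f_\sharp^\eta, f_\flat^\eta$ dominates the valuation and picks the winner for every large $n$; when $a_p=0$ or the $\mu$-invariants coincide, the parity-dependent alternation of $\ord_p\omega_n^{\sharp/\flat}(\zeta_{p^n}-1)$ breaks the tie in accordance with the algorithm's choice. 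Either way the two summands carry distinct valuations for large $n$, so no cancellation can occur, and $\ord_p g_n^\eta(\zeta_{p^n}-1) = \ord_p\omega_n^*(\zeta_{p^n}-1) + \ord_p f_*^\eta(\zeta_{p^n}-1)$; multiplying by $p^n-p^{n-1}$ gives the required identity.

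The main obstacle is the valuation matrix computation in (i) for $a_p\neq 0$: here the clean Pollack-style factorization of $\omega_n^{\sharp/\flat}$ is unavailable, and one must show that the cross terms introduced by the $a_p$ entry of each $\CCC_k$ never disturb the leading $p$-adic valuations of the entries of $\H_n(\zeta_{p^n}-1)$ predicted by the diagonal recursion. The valuation matrices of Definition \ref{valuationmatrix} are precisely the tool developed to make this bookkeeping tractable. A secondary delicacy, handled by the algorithm, is ensuring that the choice of $*$ remains uniform in $u$ and consistent with the $\eta$-decomposition, particularly at the tie cases $\mu(\col_\sharp^\eta)=\mu(\col_\flat^\eta)$.
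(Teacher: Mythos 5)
Your proposal follows essentially the same route as the paper: reduce $\nabla_n M^\eta$ to $(p^n-p^{n-1})\ord_p$ of the relevant element evaluated at $\zeta_{p^n}-1$, observe that the $\Phi_n\omega_{n-1}^{\sharp/\flat}$ summands drop out since $\Phi_n(\zeta_{p^n})=0$, extract $\ord_p\omega_n^{\sharp/\flat}(\zeta_{p^n}-1)$ from the valuation-matrix lemma (using $v=\infty$ if $a_p=0$ and $v=1$ otherwise by the Weil bound), and invoke the Weierstra\ss{} preparation theorem for $f_*^\eta$ to see that the summand selected by the Modesty Algorithm has strictly smaller valuation, so the two terms cannot cancel. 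The only difference is one of emphasis: you spell out the no-cancellation argument, which the paper compresses into ``doing the calculations for even and odd $n$,'' and you gesture at rather than carry out the valuation-matrix computation, which the paper does separately in its preceding lemma.
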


To prove this proposition, we use the following idea.
\begin{definition}\label{valuationmatrix}Let $A=\links a & b \\ c & d \rechts$ be a matrix with entries in $\overline{\Q_p}$. We call \[\ord_p(A)=\bai \ord_p(a) & \ord_p(b) \\ \ord_p(c) & \ord_p(d) \dai\] the \textit{valuation matrix} of $A$. 

Note that we have an unnatural multiplication of valuation matrices: \[\ord_p(A)\ord_p(A')=\bai \min(\ord_p(aa'),\ord_p(bc')) & \min(\ord_p(ab'), \ord_p(bd'))\\ \min(\ord_p(a'c),\ord_p(c'd)) & \min(\ord_p(b'c),\ord_p(dd')) \dai\]
\end{definition}
\begin{lemma}Let $v:=\ord_p(a_p)$. Then the left column of $\ord_p(\H_n(\zeta_{p^n}-1))$ is
\[\bai v+p^{-2}+p^{-4}+\cdots +p^{2-n}\\ p^{-1}+p^{-3}+p^{-5}+\cdots +p^{1-n} \end{array}\right. \text{if $n$ is even}, \]

\[\bai p^{-1}+p^{-3}+p^{-5}+\cdots +p^{2-n}\\ v+p^{-2}+p^{-4}+\cdots +p^{1-n} \end{array}\right. \text{if $n$ is odd.}
 \]
\end{lemma}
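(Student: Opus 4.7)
The plan is to exploit the structure of $\H_n$ via a two-term linear recursion on the columns of $\CCC_1\cdots\CCC_{n-1}$, solve it by an explicit subset expansion, then minimize the resulting $p$-adic valuations by a short combinatorial argument. First I would note that at $X=\zeta_{p^n}-1$ the polynomial $\Phi_n(1+X)$ vanishes, so $\CCC_n$ specializes to $\links a_p & 1 \\ 0 & 0 \rechts$ and $\CCC_nA^{-1}$ to $\links 1 & 0 \\ 0 & 0 \rechts$; hence the left column of $\H_n(\zeta_{p^n}-1)$ is (up to a sign) the left column of $\CCC_1\cdots\CCC_{n-1}$ evaluated at the same point.

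Next, letting $\alpha_k$ denote the left column of $\CCC_1\cdots\CCC_k$, one checks that the second column of $\CCC_1\cdots\CCC_{k+1}$ equals $\alpha_k$ (since $\CCC_{k+1}$ has second column $(1,0)^T$); combined with the formula for the first column, this yields the two-term recursion $\alpha_k=a_p\alpha_{k-1}-\Phi_k(1+X)\alpha_{k-2}$ with $\alpha_0=(1,0)^T$ and $\alpha_{-1}=(0,1)^T$. Iterating, the top component $u_{n-1}$ of $\alpha_{n-1}$ is a signed sum, over subsets $S\subset\{2,\ldots,n-1\}$ with no two consecutive elements, of $(-1)^{|S|}a_p^{n-1-2|S|}\prod_{i\in S}\Phi_i(1+X)$; the bottom component $v_{n-1}$ admits an analogous description as a signed sum over subsets $S\subset\{1,\ldots,n-1\}$ containing $1$ and with no two consecutive elements, with weight $(-1)^{|S|}a_p^{n-2|S|}\prod_{i\in S}\Phi_i(1+X)$.

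The bulk of the work is then a $p$-adic estimate. Since $\ord_p(\Phi_i(1+X))|_{X=\zeta_{p^n}-1}=p^{-(n-i)}$ for $1\le i\le n-1$ and $\ord_p(a_p)=v\ge 1$, each term in $u_{n-1}$ has valuation $v(n-1-2|S|)+\sum_{i\in S}p^{-(n-i)}$, and similarly for $v_{n-1}$ (with exponent $n-2|S|$ on $a_p$). Because $v\ge 1$ dominates each $p^{-(n-i)}\le 1/p$, adjoining one allowable element to $S$ strictly decreases the valuation by at least $2v-1/p>0$, so the minimum occurs at maximum $|S|$. I would then parametrize maximum-size admissible subsets by a non-decreasing binary word (via $s_j\in\{2j,2j+1\}$ or $s_j\in\{2j+1,2j+2\}$, depending on the row and on whether $1\in S$) and observe that shifting every element to the smallest allowed position strictly decreases $\sum p^{-(n-i)}$, since each ``$1$'' in the binary word multiplies one summand by a factor of $p$. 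Reading off the valuation of the unique minimizer in each of the four (row, $n$-parity) cases yields the four claimed formulas; no cancellation can occur because the minimizer has a strictly smaller valuation than every other summand.

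The hard step will be the combinatorial minimization in the third paragraph: the parity constraint differs between the top row ($1\notin S$) and the bottom row ($1\in S$), and its interaction with the parity of $n$ is precisely what exchanges $v+\sum p^{-2k}$ with $\sum p^{-(2k+1)}$ between the two rows of the statement. Once the unique leftmost minimizer is identified for each case, the formula follows immediately.
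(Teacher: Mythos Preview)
Your argument is correct, but it takes a noticeably different route from the paper's. The paper's proof exploits the factored form $\H_n=-\CCC_1\cdots\CCC_nA^{-1}$ directly: it introduces \emph{valuation matrices} $\ord_p(A)$ and their ``unnatural'' (min-plus) multiplication, records that $\ord_p(\CCC_i(\zeta_{p^n}-1))=\bai v & 0 \\ p^{i-n} & \infty\dai$ for $i<n$, and then simply multiplies these valuation matrices from right to left. At each step the two candidates in the $\min$ have visibly distinct valuations (one carries an extra $v\ge 1$, the other a term $\le p^{-1}$), so the tropical product equals the true valuation of the matrix product, and the claimed column falls out after $n$ iterations.

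Your approach instead unrolls this product: you derive the two-term recursion $\alpha_k=a_p\alpha_{k-1}-\Phi_k\alpha_{k-2}$, solve it as a signed sum over ``no two consecutive'' subsets, and then locate the unique term of minimal valuation by a combinatorial optimization. Conceptually this is the same computation --- your subset sum is exactly what the iterated min-plus product is tracking --- but the packaging is different. The paper's valuation-matrix formalism is shorter and more conceptual (and is reused elsewhere in the paper), whereas your expansion makes the non-cancellation step completely explicit: you exhibit the unique minimizing subset $S^*$ and show every other term has strictly larger valuation, something the paper leaves implicit. Either way, the key inequality is the same one you isolate, namely $2v>p^{-1}$, which holds because $v=\ord_p(a_p)\ge 1$ in the supersingular case (with $v=\infty$ when $a_p=0$, where only the maximal-$|S|$ terms survive).
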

\begin{proof} Multiply the valuation matrices of the factors of $\H_n(\zeta_{p^n}-1)$. Note that $\ord_p(\CCC_i(\zeta_{p^n}-1))=\bai v & 0 \\ p^{i-n} & \infty \dai$ for $i<n$ and $\ord_p(\CCC_n(\zeta_{p^n}-1))=\bai v & 0 \\ \infty & \infty \dai$. Denoting by $?$ a non-determined but unimportant value ,
\[\ord_p(\H_n(\zeta_{p^n}-1))=\bai v & 0 \\ p^{1-n} & \infty \dai\bai v & 0 \\ p^{2-n} & \infty \dai\cdots \bai v & 0 \\ p^{-1} & \infty \dai \bai 0 & ? \\ \infty &\infty \dai.\]
The lemma follows from the above unnatural multiplication.
\end{proof}

\begin{proof}[Proof of Proposition \ref{thestuff}]
Note that \[\nabla_n M^\eta =\ord_{\zeta_{p^n}-1}(\omega_n^\sharp f_\sharp^\eta+u\omega_n^\flat f_\flat^\eta)(\zeta_{p^n}-1)=(p^n-p^{n-1})\ord_p(\omega_n^\sharp f_\sharp^\eta+ \omega_n^\flat f_\flat^\eta)(\zeta_{p^n}-1).\] 
Now for $f\in \Z_p[[X]]$ with Iwasawa invariants $\mu$ and $\lambda$, we have $(p^n-p^{n-1})\ord_p(f(\zeta_{p^n}-1))=(p^n-p^{n-1})\mu+ \lambda$ for large $n$ by the $p$-adic Weierstra\ss\,  Preparation Theorem. The above lemma allows us to compute \[{(p^n-p^{n-1})\ord_p(f_\sharp^\eta(\zeta_{p^n}-1),f_\flat^\eta(\zeta_{p^n}-1))\hidari \omega_n^\sharp(\zeta_{p^n}-1)\\ \omega_n^\flat(\zeta_{p^n}-1)\migi}\] for large $n$, keeping in mind that $v=\infty$ if $a_p=0$, and $v=1$ if $a_p\neq 0 $ and $p$ is supersingular, because of the Weil bound $|a_p|<2\sqrt{p}$. Doing the calculations for even and odd $n$, we thus have \[(p^n-p^{n-1})\ord_p(\omega_n^\sharp f_\sharp^\eta+\omega_n^\flat f_\flat^\eta)(\zeta_{p^n}-1)=q_n^*+\nabla_n(\Lambda^\eta/f_*^\eta).\]\end{proof}
\end{subsection}

\begin{subsection}{Looking for a nice Iwasawa module}
In this section, we show that the Kobayashi rank of the module $\X_{loc}^\eta$ agrees with the Kobayashi rank of $M^\eta$ for large $n$, thereby finishing the proof.

For the case $a_p=0$, Kobayashi was able to decompose $E(\gm_n)$ into $E^\pm(\gm_n)$ (up to a constant overlap $E(\gm_{-1})$, cf. \cite[Proposition 8.22]{kobayashi}), which was at the heart of Kobayashi's analysis. This seems to be very hard in the case $a_p\neq0$ (cf. \cite[Open Problem 7.22]{shuron}). In what follows, we bypass this decomposition via Proposition \ref{auxiliary} and generalize Kobayashi's arguments centering around the exact sequence between \cite[page 33, (10.38)]{kobayashi} and \cite[page 33, (10.39)]{kobayashi} by considering $\Z_p[\Delta]$-module structures rather than just looking at isotypical components (i.e. $\Z_p$-module structures). 
\lemma We can describe the cokernel of $\col_0$ explicitly: $\coker \col_0\cong \Z_p[\Delta].$
\proof We have by \cite[Theorem 2.2 (2)]{shuron}\[\sum_{\tau\in\Delta}{\delta_0^1}^\tau=\Tr_{k_0/\Q_p}\delta_0^1=(a_p-2)\delta_0^0.\] Thus, we have \[P_{\delta_0^0}(z)=\sum_{\tau\in\Delta}\sum_{\sigma\in\Delta}\frac{({\delta_0^1}^{\tau\sigma},z)_0\sigma}{a_p-2}=\sum_{\tau\in\Delta}\sum_{\sigma\in\Delta}\frac{({\delta_0^1}^{\tau\tau^{-1}\sigma},z)_0\tau^{-1}\sigma}{a_p-2}=\frac{1}{a_p-2}\sum_{\tau\in\Delta}\tau^{-1} P_{\delta_0^1}(z).\]
We can thus write \[\col_0=(-a_p P_{\delta_0^1}+P_{\delta_0^0},-P_{\delta_0^1})=((-a_p+\frac{1}{a_p-2}\sum_{\tau\in\Delta}\tau^{-1}) P_{\delta_0^1},-P_{\delta_0^1}).\] We know that $-P_{\delta_0^1}$ is surjective (\cite[Proof of Proposition 7.3]{shuron}), so the map \[(x,y)\mapsto x+(\frac{1}{a_p-2}(\sum_{\tau^\in\Delta}\tau^{-1})-a_p)y\] induces an isomorphism between
$\frac{\Lambda_0\oplus \Lambda_0}{\im\col_0}$ and $\Lambda_0\cong \Z_p[\Delta]$.
But $\ker h_0=0$, since $h_0$ is multiplication by the matrix $\H_0=\links 0 & 1 \\ -1 & -a_p \rechts$. We thus have indeed
$\coker\col_0\cong L_0\cong(\Lambda_0\oplus \Lambda_0)/\im\col_0.$

\begin{proposition}\label{auxiliary}The following is a short exact sequence:
\[0 \rightarrow {E(k_n)\otimes \Z_p}\rightarrow H^1(k_n,T)\xrightarrow{\col_n}L_n\rightarrow \Z_p[\Delta]\rightarrow 0\]
\end{proposition}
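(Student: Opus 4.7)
I would split the four-term exact sequence into two halves, handling left exactness ($\ker\col_n = E(k_n)\otimes\Z_p$) and the cokernel computation ($\coker\col_n \cong \Z_p[\Delta]$) separately.

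\textbf{Left exactness.} Theorem \ref{colemanmap} writes $h_n\circ\col_n = (P_{\delta_n^1},P_{\delta_n^0})$ with $h_n$ injective, so $\ker\col_n = \ker P_{\delta_n^1}\cap\ker P_{\delta_n^0}$. Since $\delta_n^1$ and $\delta_n^0$ generate $\widehat{E}(\gm_n)$ as a $\Z_p[\G_n]$-module and the cup-product pairing $(\,,\,)_n$ is $\G_n$-equivariant, this intersection equals the annihilator of $\widehat{E}(\gm_n)\subseteq H^1(k_n,T)$ under the local Tate pairing. By local Tate duality the Kummer image of $E(k_n)$ is its own orthogonal complement; since $p$ is supersingular, $\widetilde{E}(\mathbb{F}_p)$ has order prime to $p$, so $E(k_n)\otimes\Z_p = \widehat{E}(\gm_n)$. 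This gives $\ker\col_n = E(k_n)\otimes\Z_p$.

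\textbf{Cokernel.} I would proceed by induction on $n$, with base case $n=0$ furnished by the preceding lemma. Theorem \ref{compatibility} yields a commutative square
\[
\xymatrix{H^1(k_n,T)\ar[r]^{\col_n}\ar[d]_{\cor}& L_n\ar[d]^{\pi}\\ H^1(k_{n-1},T)\ar[r]^{\col_{n-1}}& L_{n-1}}
\]
in which the right vertical map $\pi$ is the natural projection and is surjective (as $L_n$ is a quotient of $\Lambda_n\oplus\Lambda_n$ and $\Lambda_n\twoheadrightarrow\Lambda_{n-1}$ is surjective). This induces a well-defined surjection $\coker\col_n\twoheadrightarrow \coker\col_{n-1}\cong \Z_p[\Delta]$.

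\textbf{Main obstacle.} The hard step is showing the induced surjection $\coker\col_n\to \coker\col_{n-1}$ has trivial kernel, i.e.\ that no new cokernel appears at level $n$. My strategy is a $\Z_p$-rank comparison: the local Euler--Poincar\'e characteristic gives $\rank_{\Z_p}H^1(k_n,T)=2[k_n:\Q_p]$, and $\rank_{\Z_p}E(k_n)\otimes\Z_p = [k_n:\Q_p]$ in the supersingular case, so $\im\col_n$ has $\Z_p$-rank $[k_n:\Q_p]$. A direct computation of $\rank_{\Z_p}L_n$ from the matrix $\H_n$, using the identity $X\prod_{i=1}^n\Phi_i=\omega_n$ in $\Lambda_n$, should yield $\rank_{\Z_p}\coker\col_n = p-1 = \rank_{\Z_p}\Z_p[\Delta]$; combined with the surjection above this forces equality after a torsion check, which the explicit base case handles. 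An alternative approach would be to first identify $\coker(\col_\sharp,\col_\flat)$ at the Iwasawa level using Theorem \ref{compatibility} and the freeness of $\mathbf{H}^1_{\Iw}(T)$ as a $\Lambda$-module of rank $2$, and then descend to finite level via $\otimes_\Lambda\Lambda_n$.
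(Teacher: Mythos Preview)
Your treatment of the kernel matches the paper's exactly: both identify $\ker\col_n$ with $\ker P_{\delta_n^1}\cap\ker P_{\delta_n^0}$ via Theorem~\ref{colemanmap}, and then use that $\delta_n^1,\delta_n^0$ generate $\widehat{E}(\gm_n)$ together with local Tate duality.

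For the cokernel, however, your argument has a genuine gap. Your rank count is correct: one does find $\rank_{\Z_p}L_n^\eta=p^n+1$ (e.g.\ reduce $\H_n$ mod $p$, where each $\CCC_i$ becomes antidiagonal and the image is visibly $X^a\Lambda_n^\eta\oplus X^b\Lambda_n^\eta$ with $a+b=p^n-1$), so $\rank_{\Z_p}\coker\col_n=p-1$ as you want. But the sentence ``this forces equality after a torsion check, which the explicit base case handles'' is not an argument. Knowing that $\coker\col_0\cong\Z_p[\Delta]$ is torsion-free says nothing about possible $p$-torsion in $\coker\col_n$ for $n>0$; you have a surjection $\coker\col_n\twoheadrightarrow\Z_p[\Delta]$ with finite kernel, and you have given no reason why that kernel vanishes.

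The paper closes exactly this gap, and the tool is Nakayama's Lemma rather than a rank comparison. Since the corestriction $H^1(k_n,T)\to H^1(k_0,T)$ is surjective, your commutative square shows that $\pi(\im\col_n)=\im\col_0$, hence $\im\col_n\subset\pi^{-1}(\im\col_0)$ with quotient $Q$ satisfying $Q\otimes_{\Lambda_n}\Lambda_0=0$. As $\ker(\Lambda_n^\eta\to\Lambda_0^\eta)=(X)$ lies in the maximal ideal of the local ring $\Lambda_n^\eta\cong\Z_p[X]/\omega_n$, Nakayama gives $Q=0$, i.e.\ $\im\col_n=\pi^{-1}(\im\col_0)$. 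Then $\coker\col_n=L_n/\pi^{-1}(\im\col_0)\cong L_0/\im\col_0\cong\Z_p[\Delta]$ by the base case. This replaces your entire ``main obstacle'' paragraph: no rank computation or separate torsion check is needed. Your proposed induction $n\to n-1$ can be made to work the same way (since $\omega_{n-1}\in(X)$), but going straight to level $0$ is cleaner.
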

\begin{proof}We first prove that $\ker \col_n=E(k_n)\otimes\Z_p$: We know $\ker \col_n=\ker P_{n,{\delta_n^1}}\cap \ker P_{n,{\delta_n^0}}$ from Theorem \ref{colemanmap}. Since $\delta_n^1$ and $\delta_n^0$ are generators for $\widehat{E}(\gm_n)$ as a $\Lambda_n$-module, we thus have for $z\in H^1(k_n,T)$: $z\in H^1(k_n,T)\iff (x,z)_n=0$ for all  $x \in \widehat{E}(k_n)\iff z \in E(k_n)\otimes \Z_p,$ cf. \cite[Example 1.8.4]{rubin}.

For the assertion about the cokernel, we reduce to the case $n=0$ via Nakayama's Lemma: 

Denote the projection $L_n\rightarrow L_0$ by $\pi$. By compatibility of the Coleman maps (cf. Theorem \ref{compatibility}), we have $\im \col_n\subset \pi^{-1}\im\col_0$. In the commutative diagram with exact sequences
\[\xymatrix {H^1(k_n,T) \ar@{->}[r]^{\col_n}\ar@{->>}[d]&\pi^{-1}\im\col_0\ar[d]\ar[r] &\frac{\pi^{-1}\im\col_0}{\im\col_n}\ar[r]\ar[d]&0\\
H^1(k_0,T) \ar[r]^{\col_0}&\im\col_0\ar[r]&0,}
\]
we have $\pi^{-1}\im\col_0=\im\col_n$ by surjectivity of the first vertical map (see proof of Proposition 7.3 in \cite{shuron}) and by Nakayama's Lemma. But $\pi$ induces an isomorphism \[\frac{L_n}{\pi^{-1}\im\col_0}\xrightarrow{\cong}\frac{L_0}{\im\col_0}\] by construction, so we have indeed reduced the problem to the case $n=0$.
\end{proof}

\begin{definition}Let $\X_{\col}=(\X_{\col,n})_n$ be the inverse system under the projection maps, where $\X_{\col,n}$ is the cokernel of the composition
\[\mathbf{H}^1(T)_{\Gamma_n}\rightarrow \frac{H^1(k_n,T)}{E(k_n)\otimes \Z_p}\xrightarrow{\col_n}L_n.\]
\end{definition}

\begin{corollary}We have an exact sequence of $\Lambda_n$-modules
\[0 \rightarrow \X_{loc}(E/K_n)\rightarrow \X_{\col,n}\rightarrow \Z_p[\Delta]\rightarrow 0.\]
\end{corollary}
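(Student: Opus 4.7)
The plan is to derive the sequence formally from Proposition \ref{auxiliary} together with the definitions of $\X_{loc}(E/K_n)$ and $\X_{\col,n}$, i.e., to reinterpret both objects as cokernels of two maps that sit naturally in a short exact sequence. No new input should be needed beyond the identification of $\ker\col_n$ and $\coker\col_n$ already established.

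First, I would unwind Proposition \ref{auxiliary}: since $\ker\col_n=E(k_n)\otimes\Z_p$, the map $\col_n$ factors through an \emph{injection}
\[
\beta\colon A:=\frac{H^1(k_n,T)}{E(k_n)\otimes\Z_p}\hookrightarrow L_n,
\]
and the quoted proposition then reads as the short exact sequence of $\Lambda_n$-modules
\[
0\longrightarrow A\xrightarrow{\ \beta\ } L_n\longrightarrow \Z_p[\Delta]\longrightarrow 0.
\]

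Next, denote by $\alpha\colon \mathbf{H}^1(T)_{\Gamma_n}\to A$ the quotient of the localization map, so that by definition $\X_{loc}(E/K_n)=\coker(\alpha)$, while $\X_{\col,n}=\coker(\beta\circ\alpha)$. Identifying $A$ with its image $\beta(A)\subset L_n$, the inclusion $\im(\beta\circ\alpha)\subset A\subset L_n$ yields a tower of submodules whose successive quotients form the desired three-term sequence: by the third isomorphism theorem,
\[
0\longrightarrow \frac{A}{\im(\beta\circ\alpha)}\longrightarrow \frac{L_n}{\im(\beta\circ\alpha)}\longrightarrow \frac{L_n}{A}\longrightarrow 0,
\]
which is exactly
\[
0\longrightarrow \X_{loc}(E/K_n)\longrightarrow \X_{\col,n}\longrightarrow \Z_p[\Delta]\longrightarrow 0.
\]

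The only thing to check is $\Lambda_n$-equivariance of the connecting arrows, which is immediate from the fact that $\alpha$, $\beta$, and the projection $L_n\twoheadrightarrow \Z_p[\Delta]$ are all $\Lambda_n$-module homomorphisms by construction (cf.\ Theorem \ref{colemanmap} and the Galois equivariance of $P_{\delta_n^i}$). There is no real obstacle here, the argument is formal diagram chasing; the substantive work has already been done in Proposition \ref{auxiliary}, where the explicit computation of $\coker\col_0\cong \Z_p[\Delta]$ and the Nakayama reduction feed the short exact sequence that underlies this corollary.
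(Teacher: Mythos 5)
Your proof is correct and takes essentially the same approach as the paper, which just says ``This follows from Proposition \ref{auxiliary}.'' You have simply spelled out the third-isomorphism-theorem diagram chase that the paper leaves implicit, using the identifications $\X_{loc}(E/K_n)=\coker\alpha$ and $\X_{\col,n}=\coker(\beta\circ\alpha)$ together with the injectivity of $\beta$ and $\coker\beta\cong\Z_p[\Delta]$ furnished by the proposition.
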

\begin{proof}This follows from Proposition \ref{auxiliary}. 
\end{proof}

\begin{corollary}\label{checkthis}For any character $\eta:\Delta\rightarrow \Z_p^\times,$ we have $\nabla_n\Xloc^\eta=\nabla_n\X_{\col}^\eta-1.$
\end{corollary}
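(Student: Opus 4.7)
The plan is to take $\eta$-components of the short exact sequence
\[0 \rightarrow \Xloc(E/K_n)\rightarrow \X_{\col,n}\rightarrow \Z_p[\Delta]\rightarrow 0\]
from the preceding corollary and then apply additivity of Kobayashi ranks. Since $p$ is odd, $\#\Delta = p-1$ is a unit in $\Z_p$, so the idempotent $\varepsilon_\eta = \frac{1}{\#\Delta}\sum_{\tau\in\Delta}\eta^{-1}(\tau)\tau$ makes the functor $M\mapsto M^\eta$ exact on $\Z_p[\Delta]$-modules. This preserves the short exactness at every level $n$, and also preserves compatibility with the projection transition maps (since the $\Delta$-action is canonical and commutes with corestriction/projection). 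Thus one obtains a short exact sequence of inverse systems
\[0 \rightarrow \Xloc^\eta \rightarrow \X_{\col}^\eta \rightarrow (\Z_p[\Delta]^\eta)_n \rightarrow 0,\]
where the right-hand term is the constant inverse system attached to $\Z_p[\Delta]^\eta$.

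Next I would identify the constant quotient: $\Z_p[\Delta]^\eta \cong \Z_p$ as a $\Z_p$-module (rank one, with trivial transition maps along the tower since the sequence came from the $n=0$ cokernel computation in the preceding lemma, which is independent of $n$). Its $n$-th Kobayashi rank is therefore
\[\nabla_n(\Z_p[\Delta]^\eta)_n \;=\; \mathrm{length}_{\Z_p}(\ker \mathrm{id}) - \mathrm{length}_{\Z_p}(\coker \mathrm{id}) + \dim_{\Q_p}(\Z_p\otimes\Q_p) \;=\; 0-0+1 \;=\; 1.\]

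Finally, I would invoke the additivity of Kobayashi ranks in short exact sequences of inverse systems (this is \cite[Lemma 10.4 i]{kobayashi}, also cited in the proof of Proposition \ref{kobayashisproposition}) to conclude
\[\nabla_n\X_{\col}^\eta \;=\; \nabla_n\Xloc^\eta + \nabla_n(\Z_p[\Delta]^\eta)_n \;=\; \nabla_n\Xloc^\eta + 1,\]
which rearranges to the claim. The only place where any care is needed is in verifying that the transition maps of the inverse system $(\Z_p[\Delta])_n$ really are identities (equivalently, that the surjection $\X_{\col,n}\to \Z_p[\Delta]$ is the same at every level, up to the obvious identification); this is immediate from the fact that the cokernel of $\col_n$ was computed via Nakayama's lemma from the $n=0$ case in the proof of Proposition \ref{auxiliary}, so the transition maps on the quotient become the identity on $\coker \col_0 \cong \Z_p[\Delta]$. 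With that observation in hand, no further computation is required, and the corollary follows formally.
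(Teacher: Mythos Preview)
Your argument is correct and is essentially the same as the paper's: the paper's proof is the one-liner ``This follows from the additivity of Kobayashi ranks. See \cite[Lemma 10.4.i]{kobayashi},'' which implicitly uses exactly the steps you spell out (taking $\eta$-components of the preceding short exact sequence, identifying $\Z_p[\Delta]^\eta\cong\Z_p$ as a constant inverse system with $\nabla_n=1$, and applying additivity). Your extra paragraph verifying that the transition maps on the quotient are identities is a reasonable sanity check but not a departure from the paper's approach.
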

\begin{proof}This follows from the additivity of Kobayashi ranks. See \cite[Lemma 10.4.i]{kobayashi}.
\end{proof}

We now look at the following projective systems of $\Z_p$-modules:
\begin{itemize}
%\item $L:=(L_m)_m$, where $L_m=\frac{\Lambda_m\oplus \Lambda_m}{\ker h_m}$, and the projection maps,
\item $\Lambda=(\Lambda_m)_m$ and the projection maps, and
\item $\Lambda^\natural:=\left(\Lambda_m/(\omega_n^\sharp+\Phi_n\omega_{n-1}^\sharp, \omega_n^\flat+\Phi_n\omega_{n-1}^\flat)\right)_m$ and the projection maps.
\end{itemize}

\begin{propn}\label{randomlabel}Let $u\in\Z_p^\times$. Looking at the map $[+]_u\circ h_n:L_m\rightarrow \Lambda_m$ sending the class of $(a,b)$ to $(a(\omega_n^\sharp+\Phi_n\omega_{n-1}^\sharp)+ub(\omega_n^\flat+\Phi_n\omega_{n-1}^\flat))$ with kernel $\ker([+]_u\circ h_n)_m$, we obtain the following commutative diagram with exact sequences:
\end{propn}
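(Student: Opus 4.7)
The proposition sets up a map $[+]_u\circ h_n\colon L_m\to \Lambda_m$ and claims that it fits into a commutative diagram with exact rows. My plan is to unravel the map step by step and then verify compatibility with the projection maps $L_m\to L_{m-1}$ and $\Lambda_m\to \Lambda_{m-1}$.

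The first step is to observe that $h_n$ is well-defined on $L_m$ and in fact injective there, because $L_m=(\Lambda_m\oplus\Lambda_m)/\ker h_n$ by construction, so $h_n$ descends to an embedding $L_m\hookrightarrow \Lambda_m\oplus\Lambda_m$; the weighted collapse $[+]_u$ then produces the single-coordinate map described. A direct computation on the two standard generators $(1,0),(0,1)$ of $\Lambda_m\oplus\Lambda_m$ (before descent) shows that the image of $[+]_u\circ h_n$ is exactly the ideal \[I_m:=(\omega_n^\sharp+\Phi_n\omega_{n-1}^\sharp,\;\omega_n^\flat+\Phi_n\omega_{n-1}^\flat)\subset\Lambda_m,\] because $u\in\Z_p^\times$ is a unit and plays no role in cutting out the ideal. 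Consequently the cokernel of $[+]_u\circ h_n$ is exactly $\Lambda^\natural_m$, so at each level $m$ the row \[0\to \ker([+]_u\circ h_n)_m\to L_m\xrightarrow{[+]_u\circ h_n}\Lambda_m\to\Lambda^\natural_m\to 0\] is exact by construction.

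The commutativity with the transition maps as $m$ varies is essentially a bookkeeping exercise: since $\H_n$ has entries in $\Lambda$ (not $\Lambda_m$), the matrix $\H_n$ itself is a lift of its image modulo $\omega_m$, so reduction modulo $\omega_m\to\omega_{m-1}$ intertwines $h_n$ at level $m$ and at level $m-1$. The unit $u$ is fixed, so $[+]_u$ behaves trivially under projection. Compatibility of the $L_m$'s under the projection maps is precisely the assertion of Theorem~\ref{compatibility}, so the diagram chase goes through with no surprises. The kernel terms $\ker([+]_u\circ h_n)_m$ fit in by the snake lemma applied to the obvious comparison between $L_m$ and $L_{m-1}$, and exactness of the resulting ``kernel row'' is automatic.

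The main obstacle I anticipate is keeping track of two simultaneous levels ($n$ is fixed while $m$ varies), and in particular making sure that the kernel of $[+]_u\circ h_n$ on $L_m$ has the right projective behavior to serve the downstream Kobayashi-rank computation. Once we identify the image as $I_m$ and the cokernel as $\Lambda^\natural_m$, the rest is diagram chasing using the explicit presentations in terms of fixed elements of $\Lambda$; the payoff is that $\X_{\col,n}$ (through Proposition \ref{auxiliary}) and $M_{u,m}$ end up linked by this diagram in a way that lets the valuation-matrix estimate from Proposition \ref{thestuff} be transported to $\X_{loc}^\eta$ via Corollary \ref{checkthis}.
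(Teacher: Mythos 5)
Your outline — identify the image of the explicit formula as the ideal $I_m=(\omega_n^\sharp+\Phi_n\omega_{n-1}^\sharp,\ \omega_n^\flat+\Phi_n\omega_{n-1}^\flat)$, hence the cokernel as $\Lambda^\natural_m$, and check compatibility with the projections — is the same unpacking of ``this follows from the definitions'' that the paper intends, and the image/cokernel computation and the commutativity remarks are correct.

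However, there is a gap in the first step. You justify well-definedness on $L_m$ by saying $h_n$ descends to an embedding $L_m\hookrightarrow\Lambda_m\oplus\Lambda_m$ and then ``the weighted collapse $[+]_u$ produces the single-coordinate map described.'' But the stated formula $a(\omega_n^\sharp+\Phi_n\omega_{n-1}^\sharp)+ub(\omega_n^\flat+\Phi_n\omega_{n-1}^\flat)$ is \emph{not} of the form $\ell\bigl(h_n(a,b)\bigr)$ for any linear collapse $\ell:\Lambda_m\oplus\Lambda_m\to\Lambda_m$: applying $(x,y)\mapsto x+uy$ to $h_n(a,b)=(a\omega_n^\sharp+b\omega_n^\flat,\ a\Phi_n\omega_{n-1}^\sharp+b\Phi_n\omega_{n-1}^\flat)$ gives $a(\omega_n^\sharp+u\Phi_n\omega_{n-1}^\sharp)+b(\omega_n^\flat+u\Phi_n\omega_{n-1}^\flat)$, which is a different expression (and whose cokernel is not $\Lambda^\natural_m$). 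The unit $u$ in the paper's formula weights the \emph{domain} variable $b$, so the map is really ``first send $(a,b)\mapsto(a,ub)$, then right-multiply by $\H_n$, then add coordinates.'' Since $(a,b)\mapsto(a,ub)$ need not preserve $\ker h_m$ when $u\ne1$, the well-definedness on $L_m$ does not come for free from the factorization you describe; to close this you would need to show directly that the formula annihilates $\ker h_m$, i.e.\ that $(u-1)\,b\,(\omega_n^\flat+\Phi_n\omega_{n-1}^\flat)=0$ for $(a,b)\in\ker h_m$. Also note that the paper's $L_m$ is $(\Lambda_m\oplus\Lambda_m)/\ker h_m$, not $/\ker h_n$ as you wrote; these agree for $m\le n$ (one checks $\ker h_n=\ker h_{n-1}$ in $\Lambda_{n-1}^2$ using $\H_n=\H_{n-1}\cdot\links a_p & \Phi_n\\ -1 & 0\rechts$ and $\Phi_n\equiv p$ in $\Lambda_{n-1}$), but that small observation is part of what ``follows from the definitions'' and is worth making explicit.
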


%For $m\leq n$, we can then look at the sequence
%\[0 \rightarrow \ker([+]_u\circ h_n)_m\rightarrow L_m\xrightarrow{[+]_u\circ h_n} \Lambda_m\rightarrow \Lambda^\natural_m\rightarrow0,\]
%where the middle map is the map $[+]_u\circ h_n:L_m\rightarrow \Lambda_m$ sending the class of $(a,b)$ to $(a(\omega_n^\sharp+\Phi_n\omega_{n-1}^\sharp)+b(\omega_n^\flat+\Phi_n\omega_{n-1}^\flat))$, with kernel $\ker([+]_u\circ h_n)_m$:
\[\xymatrix {0\ar[r]&\ker([+]_u\circ h_n)_n \ar@{->}[r]\ar@{->}[d]^{\pi_1}& L_n\ar[d]\ar[r] &\Lambda_n\ar[r]\ar[d]&\Lambda^\natural_n\ar[r]\ar[d]^{\pi_4}&0\\
0\ar[r]&\ker([+]_u\circ h_n)_{n-1} \ar[r]& L_{n-1}\ar[r]&\Lambda_{n-1\ar[r]}&\Lambda^\natural_{n-1}\ar[r]&0}
\]
%\begin{propn}\label{randomlabel}This sequence is exact if $m=n$ or $m=n-1$.\end{propn}
 \begin{proof} This follows from the definitions.
\end{proof}

\begin{lemma}\label{bijections}$\pi_1$ and $\pi_4$ are $\Z_p$-module isomorphisms. 
\end{lemma}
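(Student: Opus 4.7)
The plan is to verify $\pi_4$ directly from a determinant identity, and then bootstrap to $\pi_1$ via the snake lemma. For $\pi_4$, the first step is to compute $\det \H_n$ in two ways. On the one hand, $\det \H_n = \prod_{i=1}^n \det \CCC_i \cdot \det A^{-1} = \prod_{i=1}^n \Phi_i = \omega_n/X$, using Definition \ref{Hn} together with $\det \CCC_i = \Phi_i$ and $\det A = 1$. On the other hand, expanding the displayed form of $\H_n$ gives $\det \H_n = \Phi_n(\omega_n^\sharp \omega_{n-1}^\flat - \omega_{n-1}^\sharp \omega_n^\flat)$. Canceling $\Phi_n$ yields the key identity $\omega_n^\sharp \omega_{n-1}^\flat - \omega_{n-1}^\sharp \omega_n^\flat = \omega_{n-1}/X$. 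Writing $g_1 := \omega_n^\sharp + \Phi_n \omega_{n-1}^\sharp$ and $g_2 := \omega_n^\flat + \Phi_n \omega_{n-1}^\flat$ for the two generators of the ideal $\ga \subset \Lambda$ defining the cokernel, and substituting $\omega_n^* = g_* - \Phi_n \omega_{n-1}^*$ for $* \in \{\sharp, \flat\}$, the identity collapses to $\omega_{n-1} = X(\omega_{n-1}^\flat g_1 - \omega_{n-1}^\sharp g_2) \in \ga$. Since $\omega_n = \Phi_n \omega_{n-1}$ is then automatically in $\ga$ as well, both $\Lambda^\natural_n$ and $\Lambda^\natural_{n-1}$ equal $\Lambda/\ga$, and $\pi_4$ is literally the identity map.

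For $\pi_1$, I would split the four-term sequence of Proposition \ref{randomlabel} into two short exact sequences $0 \to K_m \to L_m \to J_m \to 0$ and $0 \to J_m \to \Lambda_m \to \Lambda^\natural_m \to 0$, where $J_m \subset \Lambda_m$ denotes the image of $[+]_u \circ h_n$ and $K_m$ is its kernel, and apply the snake lemma twice. Using that $\pi_4$ is already known to be an isomorphism, the snake lemma applied to the second sequence gives $\ker(J_n \to J_{n-1}) \cong \ker(\Lambda_n \to \Lambda_{n-1}) = \omega_{n-1}\Lambda_n$ and $\coker(J_n \to J_{n-1}) = 0$. Feeding this into the snake lemma on the first sequence reduces the claim that $\pi_1$ is a $\Z_p$-module isomorphism to two subclaims: the natural map $L_n \to L_{n-1}$ is surjective, and its kernel maps isomorphically onto $\omega_{n-1}\Lambda_n$ under $[+]_u \circ h_n$. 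Surjectivity should follow from the compatibility of the inverse system $(L_m)$ with projections asserted in Theorem \ref{compatibility}, and the kernel identification from viewing $L_m$ as a submodule of $\Lambda_m^{\oplus 2}$ via the injection $h_m$.

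The main obstacle is this last kernel identification. Since $L_m = (\Lambda_m \oplus \Lambda_m)/\ker h_m$ carries an $m$-dependent ambiguity, one cannot merely intersect $L_n$ with $(\omega_{n-1}\Lambda_n)^{\oplus 2}$; instead, one would use the matrix recursion $\H_n = \H_{n-1}\,A\CCC_n A^{-1}$, which yields $\omega_n^* = a_p \omega_{n-1}^* - \Phi_{n-1} \omega_{n-2}^*$ for $* \in \{\sharp, \flat\}$, to control how $\ker h_n$ reduces modulo $\omega_{n-1}$. Once the kernel of $L_n \to L_{n-1}$ is pinned down and shown to map isomorphically onto $\omega_{n-1}\Lambda_n$, the snake sequence forces $\ker \pi_1 = \coker \pi_1 = 0$, completing the proof.
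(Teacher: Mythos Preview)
Your argument for $\pi_4$ is the paper's argument: both show that $\omega_{n-1}$ lies in the ideal $\ga=(g_1,g_2)$ via the determinant of $\H_n$. (Your identity $\omega_n^\sharp\omega_{n-1}^\flat-\omega_{n-1}^\sharp\omega_n^\flat=\omega_{n-1}/X$ is in fact the correct computation; the paper writes $\omega_{n-1}$ without the $1/X$, but this is harmless since multiplying through by $X$ still places $\omega_{n-1}$ in $\ga$.)

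For $\pi_1$ the paper takes a more direct route that bypasses exactly the obstacle you flag. Rather than characterizing $\ker(L_n\to L_{n-1})$ and showing it maps isomorphically onto $\omega_{n-1}\Lambda_n$, the paper picks a representative $(x,y)\in\Lambda_n^{\oplus 2}$ of an element of $\ker\varepsilon_\eta\pi_1$ and proves $(x,y)\in\ker h_n$ outright, so it already represents zero in $L_n$. The computation uses the very recursion $\omega_n^{*}=a_p\omega_{n-1}^{*}-\Phi_{n-1}\omega_{n-2}^{*}$ you wrote down: the vanishing of the image at level $n-1$ gives $\Phi_n\omega_{n-1}^\sharp x+u\Phi_n\omega_{n-1}^\flat y\in(\omega_n)$, while membership in the kernel at level $n$ gives $g_1x+ug_2y\in(\omega_n)$; substituting the recursion into $g_1,g_2$ and subtracting isolates the remaining row of $(x,y)\H_n$. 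Your snake-lemma packaging is more structural, but carrying it to completion requires an explicit description of the transition map $L_n\to L_{n-1}$, which is \emph{not} the naive componentwise projection on $\Lambda_m^{\oplus 2}$ (one must mediate between $\ker h_n$ and $\ker h_{n-1}$); unwinding that would reproduce the same element chase inside a heavier framework. The paper's shortcut is precisely to avoid ever naming $\ker(L_n\to L_{n-1})$ as a module.
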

\begin{proof}We prove that for any character $\eta:\Delta\rightarrow \Z_p^\times$, the $\eta$-isotypical components $\varepsilon_\eta \pi_1$  and $\varepsilon_\eta \pi_4$ of $\pi_1$ and $\pi_4$ are bijective. For $\varepsilon_\eta\pi_4$, surjectivity follows from the surjectivity of $\Lambda_n^\eta\rightarrow \Lambda_{n-1}^\eta$. As for injectivity, note that the kernel is any multiple of $\omega_{n-1}=\omega_{n-1}(X)$, while \[\omega_{n-1}^\flat(\omega_n^\sharp+\Phi_n\omega_{n-1}^\sharp)-\omega_{n-1}^\sharp(\omega_n^\flat+\Phi_n\omega_{n-1}^\flat)=\omega_n^\sharp\omega_{n-1}^\flat-\omega_{n-1}^\sharp\omega_n^\flat=\omega_{n-1},\] so the kernel is contained in the $\Lambda_n^\eta$-ideal $(\omega_n^\sharp+\Phi_n\omega_{n-1}^\sharp,\omega_n^\flat+\Phi_n\omega_{n-1}^\flat)$.

For $\varepsilon_\eta \pi_1$, surjectivity follows from the set-up. For injectivity, we analyze the kernel explicitly. If $(x,y)\in \ker\varepsilon_\eta\pi_1$, then we know \[\omega_{n-1}^\sharp \pi_1(x)+u\omega_{n-1}^\flat \pi_1(y)\in (\omega_{n-1})=0,\] so setting $\omega_n=\omega_n(X)$, we have \[\Phi_n\omega_{n-1}^\sharp x + u\Phi_n\omega_{n-1}^\flat y \in (\omega_n).\] But we also know that \[(a_p\omega_{n-1}^\sharp-\Phi_{n-1}\omega_{n-2}^\sharp + \Phi_n\omega_{n-1}^\sharp)x+u(a_p\omega_{n-1}^\flat-\Phi_{n-1}\omega_{n-2}^\flat+\Phi_n\omega_{n-1}^\flat)y\in (\omega_n),\] so it follows that \[(a_p\omega_{n-1}^\sharp-\Phi_{n-1}\omega_{n-2}^\sharp)x+u(a_p\omega_{n-1}^\flat-\Phi_{n-1}\omega_{n-2}^\flat)y\in(\omega_n).\] Thus, $(x,y)\in\ker \varepsilon_\eta h_n$.
\end{proof}

\begin{propn}Let $\eta:\Delta\rightarrow \Z_p^\times$ be a character. Then $\dim_{\Q_p}\Q_p \otimes {\ker([+]_u\circ h_n)}^\eta_{n-1}=\dim_{\Q_p}\Q_p \otimes \varepsilon_\eta\Lambda^\natural_{n-1}+1$. \end{propn}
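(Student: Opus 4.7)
The plan is to chase the bottom row of the diagram in Proposition \ref{randomlabel} through $\eta$-components and $\Q_p$-tensorization, reduce to a dimension count for $L^\eta_{n-1}$ versus $\Lambda^\eta_{n-1}$, and then evaluate that count by an explicit CRT decomposition involving the matrix $\H_{n-1}$. Since $|\Delta|=p-1$ is prime to $p$, the functor $M\mapsto M^\eta$ is exact on $\Z_p[\Delta]$-modules, and $\Q_p$ is flat over $\Z_p$; applying both operations to the bottom exact sequence of Proposition \ref{randomlabel} yields a four-term exact sequence of finite-dimensional $\Q_p$-vector spaces whose alternating sum of dimensions vanishes. Hence the claim is equivalent to
\[
\dim_{\Q_p}\Q_p\otimes L^\eta_{n-1}-\dim_{\Q_p}\Q_p\otimes\Lambda^\eta_{n-1}=1.
\]

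Next I would recall that $L_{n-1}=\Lambda_{n-1}^2/\ker h_{n-1}$ embeds into $\Lambda_{n-1}^2$ via right multiplication by $\H_{n-1}$. Combining flatness of $\Q_p$ with exactness of $\varepsilon_\eta$, rank-nullity for the $\Q_p$-linear map $\H_{n-1}\colon(\Q_p\otimes\Lambda^\eta_{n-1})^2\to(\Q_p\otimes\Lambda^\eta_{n-1})^2$ gives
\[
\dim_{\Q_p}\Q_p\otimes L^\eta_{n-1}=2\dim_{\Q_p}\Q_p\otimes\Lambda^\eta_{n-1}-\dim_{\Q_p}\ker\H_{n-1},
\]
so the goal becomes showing $\dim_{\Q_p}\ker\H_{n-1}=\dim_{\Q_p}\Q_p\otimes\Lambda^\eta_{n-1}-1$.

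To evaluate this, I would use $\Q_p\otimes\Lambda^\eta_{n-1}\cong\Q_p[X]/\omega_{n-1}(X)\cong\prod_{i=0}^{n-1}\Q_p(\zeta_{p^i})$, which has total $\Q_p$-dimension $p^{n-1}$, coming from the factorization $\omega_{n-1}=\prod_{i=0}^{n-1}\Phi_i(1+X)$ into distinct irreducibles over $\Q_p$. On the $i=0$ factor ($X=0$), every $\CCC_j$ in the expansion $\H_{n-1}=-\CCC_1\cdots\CCC_{n-1}A^{-1}$ has determinant $\Phi_j(1)=p\neq 0$, so $\H_{n-1}$ is invertible there and contributes nothing to the kernel. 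For each $i\geq 1$, substituting $X=\zeta_{p^i}-1$ makes $\Phi_i(1+X)=0$, so $\CCC_i$ drops to rank one while the remaining $\CCC_j$ ($j\neq i$) and $A$ remain invertible over $\Q_p(\zeta_{p^i})$; consequently $\H_{n-1}$ has rank one on this component, contributing a kernel of $\Q_p$-dimension $\dim_{\Q_p}\Q_p(\zeta_{p^i})=(p-1)p^{i-1}$. Summing, $\sum_{i=1}^{n-1}(p-1)p^{i-1}=p^{n-1}-1$, exactly one less than $p^{n-1}$, as required.

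The main piece of real content is the component-wise rank analysis of $\H_{n-1}$, made tractable by the explicit product decomposition in Definition \ref{Hn}: each $\CCC_j$ loses rank at precisely the level-$j$ cyclotomic roots of unity and is invertible elsewhere. The unit $u$ never enters the dimension counts, consistent with the remark preceding Proposition \ref{thestuff} that the Kobayashi ranks of $M=M_u$ are independent of $u$.
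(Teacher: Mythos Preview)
Your proof is correct. Both the paper and you reduce the claim, via the bottom row of Proposition \ref{randomlabel}, to comparing the $\Q_p$-rank of $L_{n-1}^\eta$ with that of $\Lambda_{n-1}^\eta$. The paper then argues indirectly: it first checks that $L_{n-1}^\eta$ is $\Z_p$-torsion-free, passes to $\mathbb{F}_p$-dimensions, and invokes a mod-$p$ isomorphism $\mathbb{F}_p\otimes L_{n-1}^\eta\cong \mathbb{F}_p\otimes(\Lambda_{n-1}^\eta/\omega_{n-1}^+\oplus\Lambda_{n-1}^\eta/\omega_{n-1}^-)$ coming from Kobayashi's $\pm$-decomposition. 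You instead stay over $\Q_p$, split $\Q_p\otimes\Lambda_{n-1}^\eta$ into its cyclotomic factors $\Q_p(\zeta_{p^i})$ by the Chinese Remainder Theorem, and read off the rank of $\H_{n-1}$ on each factor directly from the product formula $\H_{n-1}=-\CCC_1\cdots\CCC_{n-1}A^{-1}$. Your route avoids both the torsion-freeness verification and any reliance on the $\pm$-splitting of local points, which (as the paper itself emphasizes) has no known analogue when $a_p\neq 0$; in that sense your argument is more in keeping with the paper's stated goal of bypassing such decompositions.
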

\begin{proof}Note that $\Lambda_{n-1}^\eta$ has no $p$-torsion, and neither does $L_{n-1}^\eta$: Let $x,y\in \Lambda_{n-1}^\eta$ and suppose the image $(\overline{x},\overline{y})\in L_{n-1}^\eta$ has $(p^m\overline{x},p^m\overline{y})=0$ in $L_{n-1}^\eta$. We would then have $h_{n-1}(p^mx,p^my)=0$ in $\Lambda_{n-1}^\eta$, and since $h_{n-1}$ is multiplication by the matrix $\H_{n-1}$, we would have $h_{n-1}(x,y)=0$ in $\Lambda_{n-1}^\eta$, whence $(\overline{x},\overline{y})=0$ in $L_{n-1}^\eta$. 

Thus, it suffices to show that $\dim_{\mathbb{F}_p}\mathbb{F}_p\otimes L_{n-1}^\eta=\dim_{\mathbb{F}_p} {\mathbb{F}_p}\otimes \Lambda_{n-1}^\eta+1.$
But \[{\mathbb{F}_p}\otimes L_{n-1}^\eta\cong {\mathbb{F}_p}\otimes\left( \Lambda_{n-1}^\eta/(\omega_{n-1}^+)\oplus \Lambda_{n-1}^\eta/(\omega_{n-1}^-)\right),\text{ where }\]\[\omega_{n-1}^+=\prod_{2 \leq m\leq n-1,\text{m$m$ even}}\Phi_m(1+X), \text{ and }\omega_{n-1}^-=\prod_{m\leq n-1,\text{$m$ odd}}\Phi_m(1+X).\]
Now note that \[\omega_{n-1}^+\equiv \prod_{m \text{ even}, 1\leq m\leq n-1}X^{p^m-p^{m-1}}\mod p\text{ and }\omega_{n-1}^-\equiv \prod_{m \text{ odd}, 1\leq m\leq n-1}X^{p^m-p^{m-1}}\mod p,\] so $\dim_{\mathbb{F}_p}{\mathbb{F}_p}\otimes L_{n-1}^\eta=\sum_{1\leq m\leq n-1}=p^{n-1}-1,$ while $\dim_{\mathbb{F}_p}{\mathbb{F}_p}\otimes \Lambda_{n-1}^\eta=p^{n-1}.$
 \end{proof}

\begin{corollary}$\nabla_n{\Lambda_n^\natural}^\eta=\nabla_n\ker([+]_u\circ h_n)^\eta-1.$
\end{corollary}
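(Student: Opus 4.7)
The plan is to unfold the definition of the Kobayashi rank for each of the two projective systems $\{{\Lambda_n^\natural}^\eta\}_n$ and $\{\ker([+]_u\circ h_n)^\eta\}_n$, and to see that Lemma \ref{bijections} kills the kernel/cokernel contributions, so that the assertion reduces immediately to the dimension identity proved in the preceding proposition.

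Concretely, recall that
\[
\nabla_n M \;=\; \mathrm{length}_{\Z_p}(\ker\pi_n) - \mathrm{length}_{\Z_p}(\coker\pi_n) + \dim_{\Q_p}(M_{n-1}\otimes\Q_p).
\]
Applying $\varepsilon_\eta$ is exact, so the $\eta$-component of an isomorphism is an isomorphism. By Lemma \ref{bijections}, the transition maps $\pi_1\colon \ker([+]_u\circ h_n)_n\to\ker([+]_u\circ h_n)_{n-1}$ and $\pi_4\colon \Lambda^\natural_n\to\Lambda^\natural_{n-1}$ are $\Z_p$-module isomorphisms, and therefore so are their $\eta$-isotypical components. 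In particular both kernel and cokernel terms vanish in the formula for $\nabla_n$, leaving
\[
\nabla_n{\Lambda_n^\natural}^\eta = \dim_{\Q_p}\bigl(\Q_p\otimes\varepsilon_\eta\Lambda_{n-1}^\natural\bigr),
\qquad
\nabla_n\ker([+]_u\circ h_n)^\eta = \dim_{\Q_p}\bigl(\Q_p\otimes\ker([+]_u\circ h_n)^\eta_{n-1}\bigr).
\]

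Now invoke the dimension identity established in the proposition immediately preceding the corollary, namely
\[
\dim_{\Q_p}\bigl(\Q_p\otimes\ker([+]_u\circ h_n)^\eta_{n-1}\bigr) = \dim_{\Q_p}\bigl(\Q_p\otimes\varepsilon_\eta\Lambda_{n-1}^\natural\bigr) + 1.
\]
Substituting gives $\nabla_n\ker([+]_u\circ h_n)^\eta = \nabla_n{\Lambda_n^\natural}^\eta + 1$, which is the claim. There is essentially no obstacle here: the whole content has already been installed by Proposition \ref{randomlabel} (the diagram), Lemma \ref{bijections} (so the projective systems have trivial transition kernels and cokernels on each $\eta$-component) and the dimension comparison. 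The only thing to be mildly careful about is that passage to $\eta$-components commutes with tensoring with $\Q_p$ and with taking (co)kernels, which is immediate since $\varepsilon_\eta$ is an idempotent in the group ring $\Z_p[\Delta]$.
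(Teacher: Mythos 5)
Your proof is correct and in substance the same as the paper's, which just says ``this is a consequence of the additivity of Kobayashi ranks'' and cites Kobayashi. You unfold the definition of $\nabla_n$ directly: Lemma \ref{bijections} makes $\pi_1^\eta$ and $\pi_4^\eta$ isomorphisms (taking $\eta$-components is exact because $\varepsilon_\eta\in\Z_p[\Delta]$ is idempotent), so both $\nabla_n$'s reduce to $\Q_p$-dimensions at level $n-1$, and then the dimension proposition gives the offset of $1$. If anything your route is slightly cleaner, since a naive appeal to additivity across the four-term sequence would run into the fact that the inner terms $L_m$ and $\Lambda_m$ have projection maps with infinite kernels, so their individual Kobayashi ranks are not defined; your argument sidesteps this by never needing them.
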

\begin{proof} This is a consequence of the additivity of Kobayashi ranks \cite[Lemma 10.4 i]{kobayashi}. \end{proof}
\begin{lemma}\label{welldefinedness} There is a unit $u \in \Z_p^\times$ so that we have $\im\col_i \cap \ker([+]_u\circ h_n)_i=0$ for any $i$ so that $n\geq i\geq0$.
\end{lemma}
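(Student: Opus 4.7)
The strategy is to show that the set of ``bad'' units---those $u \in \Z_p^\times$ for which $\im \col_i \cap \ker([+]_u \circ h_n)_i \neq 0$ for some $i \in \{0,1,\ldots,n\}$---is a proper subset of $\Z_p^\times$, and then to pick $u$ outside it. The key observation is that the map $[+]_u \circ h_n : L_i \to \Lambda_i$ depends \emph{affinely} on $u$: writing $X_1 := \omega_n^\sharp + \Phi_n \omega_{n-1}^\sharp$ and $X_2 := \omega_n^\flat + \Phi_n \omega_{n-1}^\flat$, the formula $[(a,b)] \mapsto a X_1 + u\, b X_2$ decomposes as $A_i + u B_i$ for fixed $\Z_p$-linear maps $A_i, B_i : L_i \to \Lambda_i$ independent of $u$. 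Moreover, since $u$ is a unit, $\mathrm{im}([+]_u \circ h_n) = (X_1,X_2)\Lambda_i$ is independent of $u$, so the $\Z_p$-rank of $\ker([+]_u \circ h_n)_i$ is also constant in $u$.

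For each fixed $i$, I would choose a $\Z_p$-basis of $\im \col_i$ (which is finitely generated since $L_i$ is). The restriction $(A_i + u B_i)\big|_{\im \col_i}$ is then represented by a matrix whose entries are polynomials in $u$ of degree at most $1$. The condition that $\im \col_i \cap \ker([+]_u \circ h_n)_i \neq 0$ is equivalent to this restricted map failing to be injective, which occurs precisely when a certain minor of the matrix vanishes. This cuts out the zero locus of some polynomial $P_i(u) \in \Z_p[u]$.

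The key technical step is to verify, for each $i$, that $P_i(u)$ is \emph{not identically zero}---equivalently, to exhibit a single value of $u$ for which the restricted map is injective on $\im \col_i$. I expect this to be the main obstacle. I would approach it using the explicit description of $\im \col_i$ coming from Proposition \ref{auxiliary} together with the structure of $\H_n$ from Definition \ref{Hn}, possibly leveraging the valuation-matrix machinery of Subsection \ref{interestingcalculations} to compare the ``level-$i$'' data (defining $h_i$ and thus $L_i$) with the ``level-$n$'' data (defining $X_1,X_2$). A natural candidate is to test at $u=1$ or at a value of $u$ making $X_1 + u X_2$ have a controllable valuation pattern, and to argue that any non-trivial element of $\im \col_i$ in $\ker([+]_{u}\circ h_n)_i$ would force a degenerate relation between entries of $\H_i$ and $\H_n$ that can be ruled out using the recursion $\H_n = \H_{n-1}A\mathcal{C}_n A^{-1}$.

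Once each $P_i$ is non-zero, it has only finitely many roots in $\Z_p$. The total bad set $\bigcup_{i=0}^{n} \{u \in \Z_p^\times : P_i(u) = 0\}$ is therefore finite, so any $u$ in its (non-empty) complement inside $\Z_p^\times$ satisfies the conclusion of the lemma.
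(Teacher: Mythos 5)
Your high-level strategy—recognize that the failure of $\im\col_i\cap\ker([+]_u\circ h_n)_i=0$ is cut out by polynomial conditions in $u$, then argue that only finitely many $u$ can be bad—is a reasonable template, and the observations that $[+]_u\circ h_n$ is affine in $u$ and that $\im([+]_u\circ h_n)$ (hence the $\Z_p$-rank of the kernel) does not depend on $u$ are both correct. The problem is that the entire burden of the lemma is concentrated in the step you flag as ``the main obstacle'': exhibiting, for each $i$, one unit $u$ at which the restricted map $(A_i+uB_i)|_{\im\col_i}$ is injective. Without that, you have not shown $P_i\not\equiv 0$, and the finiteness of the bad set does not follow. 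You offer only a heuristic (``test at $u=1$ or a value with controllable valuations, use the recursion $\H_n=\H_{n-1}A\CCC_n A^{-1}$''), not an argument; as written the proposal does not establish the lemma. A secondary imprecision: failure of injectivity over $\Q_p$ is the simultaneous vanishing of \emph{all} maximal minors, not of ``a certain minor,'' so you would need to pick out one minor and show that single minor is nonzero as a polynomial in $u$—which is exactly the nondegeneracy you have not proved.

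For comparison, the paper's own argument avoids the generic-position framework entirely and proceeds explicitly. It first uses $\im\col_i=\pi_{i/0}^{-1}(\im\col_0)$ to reduce the claim to the single case $i=n$, then rewrites the $i=n$ statement as the assertion that $P_{\delta_n^1}(z)+P_{u\delta_n^0}(z)=0$ forces $P_{\delta_n^1}(z)=P_{u\delta_n^0}(z)=0$. This is then proved factor-by-factor: one shows $P_{\delta_n^1}(z)\in(\Phi_j(1+X))$ for each $1\le j\le n$ via \cite[Lemma 3.6]{shuron} and the module structure of the $\delta_n^i$, and separately $P_{\delta_n^1}(z)\in(X)$ by a computation at level $0$. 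The unit $u$ is chosen by imposing concrete arithmetic constraints (e.g.\ $u\in\Z_p^\times\setminus\Z$ so that $\delta_n^1+u\delta_n^0$ is never an integer multiple of a single generator $\delta_n^{j-n}$, plus linear conditions such as $\frac{a}{a_p-2}\neq-\frac{b}{p-1}$). So the paper does supply exactly the non-degeneracy input your sketch is missing, but by direct computation with the Kobayashi-style generators rather than by a Zariski-density argument. If you want to salvage your route, the work of verifying $P_i\not\equiv 0$ would in practice amount to redoing something equivalent to this explicit computation.
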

\begin{proof}Since $\im \col_i= \pi^{-1}_{i/0}(\im \col_0)$, where  $\pi_{i/0}$ is the projection from $L_i$ to $L_0$ (see proof of Proposition \ref{auxiliary}), this follows once we know it for $i=n$. But for $i=n$, note that $\ker \col_n=\ker (P_{\delta_n^1}, P_{\delta_n^0})$ by Theorem \ref{colemanmap}, so our assertion is equivalent to $\ker P_{\delta_n^1+u\delta_n^0}=\ker (P_{\delta_n^1}, P_{u\delta_n^0})$, i.e. we would like to prove that there is a unit $u$ so that $P_{\delta_n^1}(z)+ P_{u\delta_n^0}(z)=0$ implies $P_{\delta_n^1}(z)=0$ and $P_{u\delta_n^0}(z)=0$. We actually prove that $P_{\delta_n^1}(z)\subset (\Phi_j(1+X))$  for any $1\leq j\leq n$ and $P_{\delta_n^1}(z)\subset (X)$, which is enough, since $\omega_n(X)=X\prod_{1\leq j\leq n}\Phi_j(1+X)$. 

Choose $u$ so that $\delta_n^1+u\delta_n^0\neq v\delta_n^{j-n}$ for any $v\in \Z\cap \Z_p$. We can do this by letting $u\in \Z_p^\times\backslash\Z$, which has infinitely many elements. Thus, we write $\delta_n^1+u\delta_n^0= a\delta_n^{j-n+1}+b\delta_n^{j-n}$ with $a,b \in \Z_p\backslash\{0\}$. Now $\im bP_{\delta_n^{j-n}}=\im P_{b\delta_n^{j-n}}\subset \Phi_j(1+X)\Lambda_n$ by \cite[Lemma 3.6]{shuron}, and since $\Lambda_n$ has no $p$-torsion, $\im P_{\delta_n^{j-n+1}}\subset \Phi_j(1+X)\Lambda_n$ as well. By induction, $P_{\delta_n^1}(z)\subset \Phi_j(1+X)\Lambda_n$. 

To see that $P_{\delta_n^1}(z)\subset X\Lambda_n$, we have to prove that $P_{\delta_0^{n+1}}(z)=0$. Recall that we are assuming $P_{\delta_n^1}(z)+ P_{u\delta_n^0}(z)=0$, which implies $P_{\delta_0^{n+1}}(z)+ P_{u\delta_0^n}(z)=0$. Write $\delta_0^{n+1}+u\delta_0^n=a\delta_0^1+b\delta_0^0$ for $a,b\in\Z_p$. Since we have infinitely many units at our disposal, we may assume we have chosen $u$ so that $a\neq0\neq b$ and $\frac{a}{a_p-2}\neq -\frac{b}{p-1}$. But now we have \[P_{\delta_0^0}(z)=\sum_{\sigma\in\Delta=\G_0}({\delta_0^0}^\sigma,z)_0\sigma=\varepsilon_{\text{$\mathbb{1}$}}(p-1)(\delta_0^0,z)_0\] by definition, where $\mathbb{1}$ denotes the trivial character, and further 
\[\varepsilon_{\mathbb{1}}P_{\delta_0^1}(z)=(a_p-2)\varepsilon_{\mathbb{1}}(\delta_0^0,z)_0.\]
Since the non-trivial component of $bP_{\delta_0^0}(z)$ vanishes, so does that of $aP_{\delta_0^1}(z)$. Further, \[\varepsilon_{\mathbb{1}}(aP_{\delta_0^1}(z)+bP_{\delta_0^0}(z))=\varepsilon_{\mathbb{1}}cP_{\delta_0^0}(z)=0\] for some $c\in\Z_p\backslash\{0\}$ by the choice of $u$. Thus $P_{\delta_0^1}(z)=P_{\delta_0^0}(z)=0$, so that $P_{\delta_0^{n+1}}(z)=0$.
%From the setup, it suffices to prove this for $i=n$. That is, we have to prove $P^\eta_{\delta_n^1+u\delta_n^0}(z)=0$ implies $P_x(z)=0$ for any $x\in \hat{E}(\gm_n)$. This we accomplish as follows: Note that for $0 \leq i \leq n$, $P_{\delta_n^{i-n}}\subset \Phi_i(1+X)\Lambda_n$ (cf. \cite[Lemma 3.6]{shuron}). Thus if we choose $u$ so that $\delta_n^1+u\delta_n^0\neq v\delta_n^{i-n}$, where $v\in \Z$ is a unit in $\Z_p$, we know that $P_{p^*\delta_n^1}(z)=P_{p^*\delta_n^0}(z)=0$, where $*$ denotes some non-negative integer. But $\hat{E}(\gm_n)$ and $L_n$ have no $p$-torsion, so $P_{\delta_n^1}(z)=P_{\delta_n^0}(z)=0$. As for the choice of $u$, note that the cardinality of $\Z_p^\times-\Z$ is infinite, so there is a $u$ that makes the argument work for each of the (finitely many) $i$.
\end{proof}

\begin{remark}By the table in \cite[Section 2]{shuron}, $u=1+a_p$ would work, but we stayed away from that explicit choice because we didn't want to involve the periodicity of the $\delta_n^i$ with respect to $i$.\end{remark}

\begin{propn}Let $u\in\Z_p^\times$ satisfy the conclusions of the above  Lemma \ref{welldefinedness}. The exact sequence from Proposition \ref{randomlabel}, together with the projection maps, induces the following commutative diagram with exact sequences of $\Z_p[\Delta]$-modules.
%, where the first and last projection maps $\pi_1$ and $\pi_4$ are bijections.

\[\xymatrix {0\ar[r]&\ker([+]_u\circ h_n)_n \ar@{->}[r]\ar@{->}[d]^{\pi_1}& \X_{\col,n}\ar[d]\ar[r] &M_{n,u}\ar[r]\ar[d]&\Lambda^\natural_n\ar[r]\ar[d]^{\pi_4}&0\\
0\ar[r]&\ker([+]_u\circ h_n)_{n-1} \ar[r]& \X_{\col,n-1}\ar[r]&M_{n-1,u\ar[r]}&\Lambda^\natural_{n-1}\ar[r]&0}
\]
\end{propn}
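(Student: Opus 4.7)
The plan is to obtain the top row by passing the exact sequence of Proposition \ref{randomlabel} to appropriate quotients via the snake lemma, using Lemma \ref{welldefinedness} to kill the relevant obstruction; the bottom row follows by the same argument with $n-1$ in place of $n$, and the commutativity of the squares will be transparent from the construction.

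First I would identify the kernels of the surjections $L_n\twoheadrightarrow \X_{\col,n}$ and $\Lambda_n\twoheadrightarrow M_{n,u}$. Since $\mathbf{H}^1(T)$ is free of rank one over $\Lambda$ with basis $\mathbf{e}$, and $\col_n$ is compatible with $(\col_\sharp,\col_\flat)$ in the inverse limit by Theorem \ref{compatibility}, the image $N_n\subset L_n$ of $\mathbf{H}^1(T)_{\Gamma_n}$ under the composition defining $\X_{\col,n}$ is the cyclic $\Lambda_n$-submodule generated by the class of $(f_\sharp,f_\flat)$. Applying $[+]_u\circ h_n$ to this generator yields the principal ideal $N_n'=\bigl(f_\sharp(\omega_n^\sharp+\Phi_n\omega_{n-1}^\sharp)+uf_\flat(\omega_n^\flat+\Phi_n\omega_{n-1}^\flat)\bigr)\Lambda_n$, so that $\Lambda_n/N_n'=M_{n,u}$ by the very definition of $M_u$. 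This assembles into a commutative diagram of short exact sequences with rows $0\to N_n\to L_n\to \X_{\col,n}\to 0$ and $0\to N_n'\to \Lambda_n\to M_{n,u}\to 0$, the vertical maps between them being restrictions of $[+]_u\circ h_n$.

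Next I apply the snake lemma to this square. The restricted map $[+]_u\circ h_n\colon N_n\to N_n'$ is surjective by construction, and it is injective because $N_n\subset\im\col_n$ while Lemma \ref{welldefinedness} forces $\im\col_n\cap\ker([+]_u\circ h_n)_n=0$. Both the kernel and cokernel of this restriction therefore vanish, so the snake sequence collapses to natural isomorphisms $\ker(\X_{\col,n}\to M_{n,u})\cong \ker([+]_u\circ h_n)_n$ and $\coker(\X_{\col,n}\to M_{n,u})\cong \coker([+]_u\circ h_n)_n$, the latter being identified with $\Lambda_n^\natural$ by Proposition \ref{randomlabel}. Splicing these identifications gives the desired four-term exact sequence at level $n$, and the identical argument at level $n-1$ gives the lower row.

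For the commutativity of the three squares, each vertical map in the diagram is induced by the natural projection from level $n$ to level $n-1$, which is compatible with the Coleman maps by Theorem \ref{compatibility} and manifestly compatible with the formulas defining $h_n$ and $[+]_u$; hence the snake-lemma identifications are functorial in $n$. The main obstacle I foresee is not in the diagram chase itself but in ensuring that the connecting homomorphism in the snake sequence is genuinely zero — which is precisely the role of Lemma \ref{welldefinedness}, so the choice of $u$ there is doing the substantive work.
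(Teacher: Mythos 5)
Your argument is correct and tracks the paper's (extremely terse) proof in substance: the paper cites Lemma~\ref{welldefinedness} exactly to guarantee that $\ker([+]_u\circ h_n)_n$ injects into $\X_{\col,n}$ after quotienting by $N_n$ — your observation that $N_n\subset\im\col_n$ so $N_n\cap\ker([+]_u\circ h_n)_n=0$ is precisely the content of ``well-definedness from Lemma~\ref{welldefinedness}'' — and the commutativity of the squares with respect to the projection maps is, as you say, a consequence of the set-up and the compatibility in Theorem~\ref{compatibility}. Where you differ slightly is in organization: you make the identifications $\ker(\X_{\col,n}\to M_{n,u})\cong\ker([+]_u\circ h_n)_n$ and $\coker(\X_{\col,n}\to M_{n,u})\cong\Lambda_n^\natural$ flow cleanly from the snake lemma applied to the two short exact sequences $0\to N_n\to L_n\to\X_{\col,n}\to 0$ and $0\to N_n'\to\Lambda_n\to M_{n,u}\to 0$, after showing the leftmost vertical arrow is an isomorphism; the paper instead invokes ``calculations with determinants,'' which presumably refers back to the identity $\omega_{n-1}^\flat(\omega_n^\sharp+\Phi_n\omega_{n-1}^\sharp)-\omega_{n-1}^\sharp(\omega_n^\flat+\Phi_n\omega_{n-1}^\flat)=\omega_{n-1}$ used in Lemma~\ref{bijections} to pin down the end vertical maps. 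Your snake-lemma packaging is arguably the cleaner exposition of the same idea and correctly isolates what $u$ is doing; the only small imprecision worth flagging is that the middle and rightmost vertical arrows in your auxiliary diagram are the full map $[+]_u\circ h_n$ and the induced map on quotients, not literal restrictions, but this is a slip of language rather than a gap.
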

\begin{proof}Commutativity follows from the set-up, and well-definedness from the above Lemma \ref{welldefinedness}. The bijectivities follow from calculations with determinants. \end{proof}

%\begin{lemma}\label{exceptionallemma}Let $a_p=-p, \eta$ be nontrivial, and $n\equiv 0\mod 6$. Then $\im\col_{n-1}^\eta = \ker([+]_u\circ h_n)_{n-1}^\eta$ and $\im\col_{n}^\eta = \ker([+]_u\circ h_n)_{n}^\eta$. If $a_p=-p$ and $\eta$ is trivial or $n \not\equiv 0 \mod 6$, then $\im\col_{n-1}^\eta \cap \ker([+]_u\circ h_n)_{n-1}^\eta=0$ and $\im\col_{n}^\eta \cap \ker([+]_u\circ h_n)_{n}^\eta=0$\end{lemma}
\begin{corollary}$\nabla_n \X_{\col}^\eta=\nabla_n M^\eta+1$.\end{corollary}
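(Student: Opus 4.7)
The strategy is to reduce the four-term exact sequence of projective systems displayed just above to two short exact sequences, and then to apply the additivity of Kobayashi ranks (\cite[Lemma 10.4.i]{kobayashi}) twice, combining the outputs with the previous corollary $\nabla_n {\Lambda_n^\natural}^\eta=\nabla_n \ker([+]_u\circ h_n)^\eta-1$.

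\textbf{Step 1 (splitting the four-term sequence).} Let $I_{n,u}^\eta$ denote the image of $\X_{\col,n}^\eta \to M_{n,u}^\eta$, which equals the kernel of $M_{n,u}^\eta\twoheadrightarrow {\Lambda^\natural_n}^\eta$. The commutative diagram from the previous proposition splits into two compatible short exact sequences of projective systems of $\Z_p$-modules,
\[0\to \ker([+]_u\circ h_n)^\eta \to \X_{\col}^\eta \to I^\eta \to 0,\qquad 0\to I^\eta \to M^\eta \to {\Lambda^\natural}^\eta \to 0,\]
with transition maps inherited from those of the diagram. Because $\pi_1$ and $\pi_4$ are $\Z_p$-module isomorphisms by Lemma \ref{bijections}, the transition maps on the outer systems have finite (indeed trivial) kernel and cokernel; by a diagram chase this forces the transition maps on $I^\eta$, and hence on $\X_{\col}^\eta$ and $M^\eta$, to have finite kernel and cokernel as well, so that all the Kobayashi ranks in sight are well-defined.

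\textbf{Step 2 (additivity).} Applying additivity of Kobayashi ranks to each of the two short exact sequences we obtain
\[\nabla_n \X_{\col}^\eta = \nabla_n \ker([+]_u\circ h_n)^\eta + \nabla_n I^\eta,\qquad \nabla_n M^\eta = \nabla_n I^\eta + \nabla_n {\Lambda^\natural}^\eta.\]
Subtracting the two identities eliminates the auxiliary term $\nabla_n I^\eta$, leaving
\[\nabla_n \X_{\col}^\eta - \nabla_n M^\eta = \nabla_n \ker([+]_u\circ h_n)^\eta - \nabla_n {\Lambda^\natural}^\eta.\]

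\textbf{Step 3 (conclusion).} By the corollary just preceding Lemma \ref{welldefinedness} we have $\nabla_n \ker([+]_u\circ h_n)^\eta - \nabla_n {\Lambda^\natural}^\eta = 1$, and therefore $\nabla_n \X_{\col}^\eta = \nabla_n M^\eta + 1$.

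The only genuine subtlety is the one addressed in Step 1, namely verifying that the intermediate system $I^\eta$ is a legitimate input for additivity (finite transition kernel and cokernel). This is immediate from the snake lemma applied vertically in the diagram, using that $\pi_1$ is an isomorphism at one end and that $\pi_4$ is an isomorphism at the other; the rest of the argument is purely formal bookkeeping with Kobayashi ranks.
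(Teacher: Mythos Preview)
Your approach is exactly what the paper intends by its one-line citation of additivity, and splitting the four-term sequence into two short ones plus invoking the earlier corollary $\nabla_n{\Lambda^\natural}^\eta=\nabla_n\ker([+]_u\circ h_n)^\eta-1$ is the right way to unpack it.

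One correction to Step~1: the snake-lemma chase you describe shows only that the kernels and cokernels of the transition maps on $I^\eta$, $\X_{\col}^\eta$, and $M^\eta$ are all \emph{isomorphic to one another}, not that they are finite. Knowing that $\pi_1$ and $\pi_4$ are isomorphisms says nothing about the size of the middle kernels (for a counterexample take $A=D=0$, $B=C=\Z_p$ with the identity as the horizontal map and zero as the vertical map). The missing finiteness input is that $\nabla_n M^\eta$ is already known to be defined for large $n$: either quote Proposition~\ref{thestuff} directly, or observe that $M^\eta=\Lambda^\eta/(g)$ with $g\neq 0$ and invoke Remark~\ref{upsilonisnabla} (\cite[Lemma~10.5.i]{kobayashi}). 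With that in hand, Kobayashi's additivity lemma (in its ``two defined implies the third'' form) bootstraps well-definedness from $M^\eta$ and ${\Lambda^\natural}^\eta$ to $I^\eta$, and then from $I^\eta$ and $\ker([+]_u\circ h_n)^\eta$ to $\X_{\col}^\eta$; your Steps~2--3 then go through verbatim.
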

\begin{proof} This is a consequence of the additivity of Kobayashi ranks \cite[Lemma 10.4 i]{kobayashi}. \end{proof}
\begin{corollary} $\nabla_n \X_{loc}^\eta=\nabla_n M^\eta$.\end{corollary}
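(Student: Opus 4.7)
The plan is essentially to chain together the two preceding corollaries, which already do all the real work. The first of them, coming from the short exact sequence
\[
0 \to \X_{loc}(E/K_n) \to \X_{\col,n} \to \Z_p[\Delta] \to 0,
\]
and the additivity of Kobayashi ranks \cite[Lemma 10.4 i]{kobayashi}, yields
\[
\nabla_n \X_{loc}^\eta = \nabla_n \X_{\col}^\eta - \nabla_n(\Z_p[\Delta])^\eta = \nabla_n \X_{\col}^\eta - 1,
\]
since $(\Z_p[\Delta])^\eta \cong \Z_p$ contributes a $\Q_p$-dimension of $1$ under the projection map (which is an isomorphism on the constant module).

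The second corollary, which comes from the four-term commutative diagram
\[
\xymatrix{
0\ar[r] & \ker([+]_u\circ h_n)_n \ar[r]\ar[d]^{\pi_1} & \X_{\col,n}\ar[d]\ar[r] & M_{n,u}\ar[d]\ar[r] & \Lambda^\natural_n \ar[d]^{\pi_4}\ar[r] & 0\\
0\ar[r] & \ker([+]_u\circ h_n)_{n-1} \ar[r] & \X_{\col,n-1}\ar[r] & M_{n-1,u}\ar[r] & \Lambda^\natural_{n-1}\ar[r] & 0,
}
\]
combined with Lemma \ref{bijections} (which says $\pi_1$ and $\pi_4$ are isomorphisms, contributing zero Kobayashi rank) and the preceding proposition (which compares the $\Q_p$-dimensions of the left and right columns with a difference of $1$), gives
\[
\nabla_n \X_{\col}^\eta = \nabla_n M^\eta + 1.
\]

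Putting the two equalities together gives the desired identity
\[
\nabla_n \X_{loc}^\eta = \nabla_n \X_{\col}^\eta - 1 = (\nabla_n M^\eta + 1) - 1 = \nabla_n M^\eta.
\]
There is essentially no obstacle at this stage, since all of the delicate work (the description of $\coker \col_0$, the reduction to $n=0$ via Nakayama, the choice of the unit $u$ in Lemma \ref{welldefinedness}, and the dimension comparison for $\ker([+]_u\circ h_n)$ versus $\Lambda^\natural$) has been absorbed into the two preceding corollaries. The final proof is therefore a one-line combination of them.
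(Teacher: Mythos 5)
Your proposal is correct and matches the paper's argument exactly: the paper's proof is simply ``Apply Corollary \ref{checkthis}'' (i.e.\ $\nabla_n\X_{loc}^\eta=\nabla_n\X_{\col}^\eta-1$) combined with the immediately preceding corollary $\nabla_n\X_{\col}^\eta=\nabla_n M^\eta+1$, which is the same one-line chaining you describe.
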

\begin{proof}Apply Corollary \ref{checkthis}.\end{proof}
The Modesty Proposition follows from this last corollary, Proposition \ref{thestuff}, and Remark \ref{upsilonisnabla}. 
%\begin{corollary}If $a_p=-p, \eta$ is nontrivial and $n \equiv 0 \mod 6$, then $\nabla_n \X_{loc}^\eta=\nabla_n M^\eta-1$. In all other cases, we have $\nabla_n \X_{loc}^\eta=\nabla_n M^\eta$.\end{corollary}
%\begin{proof} If $a_p\neq-p$, then blablabla thus $\nabla_n \X_{\col}^\eta=\nabla_n M^\eta+1$. The result follows taking into account $\nabla_n\Lambda_n^\natural=\nabla_n\ker([+]_u\circ h_n)-1,$ which can be seen as follows. If $a_p=-p$ and $\eta $ is trivial or $n \not\equiv 0 \mod 6$, then the $\eta$-isotypical component of the commutative diagram in proposition is a commutative diagram of $\Z_p$-modules with exact sequences, since blablablablabla, so the result follows as above. When $a_p=-p,\eta$ is nontrivial and $n\equiv 0 \mod 6$, then by lemma \ref{exceptionallemma}, the $\X_{\col,n}$ and $\X_{\col,n-1}$ inject into $M_n$ and $M_{n-1}$ respectively. MORE DETAILS.\end{proof}
\end{subsection}

\end{section}
\begin{section}{Relation to the work of Perrin-Riou}
Our Main Theorem \ref{maintheorem} generalizes and explains the works of Nasybullin and Perrin-Riou.
Nasybullin \cite{nasybullin} announced similar formulas almost four decades ago, some even in a more general context. He used unspecified pairs of integers and suggestively denoted them by $\mu$ and $\lambda$, but it seems he gave no proofs. A translation of this article in Russian is available upon request.

About a decade ago, Perrin-Riou \cite{perrinriou} found similar formulas for the $p$-primary part of $\Sha$ along the cyclotomic $\Z_p$-extension of $\Q$ by building on the work of Kurihara \cite{kurihara}. She constructed her pairs of $\mu$ and $\lambda$-invariants as limits of certain invariants of sequences of polynomials. 

\begin{theorem}[(Perrin-Riou \cite{perrinriou}, Th\'{e}or\`{e}me 6.1 (1),(4))] \footnote{We have corrected some typos, e.g. we have reinserted a necessary condition in brackets. Note that $K_n=\Q_n$ in her statement, following Kurihara's notation in \cite{kurihara}.}
Let $s$ be the rank of $E(\Q_\infty)$ and $p$ be a supersingular prime of good reduction and suppose $\Sha(E/\Q_n)$ is finite for all $n$. Then there are integers $\mu_+',\mu_-',\lambda_+',\lambda_-'$
 and a rational $\nu$ so that {\rm [if $a_p=0$ or we knew a priori that $\mu_+'=\mu_-'$]},
\[\ord_p\#\Sha(E/\Q_n)=\frac{p^{2[\frac{n}{2}]+1}}{p+1}\mu_+'+\frac{p^{2[\frac{n+1}{2}]+1}}{p+1}\mu_-'+\left[\frac{p}{p^2-1}p^n\right]+(\lambda_+'-s)\left[\frac{n}{2}\right]+(\lambda_-'-s)\left[\frac{n+1}{2}\right]+\nu.\]
\end{theorem}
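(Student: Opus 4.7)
The plan is to deduce Perrin-Riou's closed form from our Main Theorem \ref{maintheorem} by telescoping the incremental differences. We restrict to the trivial character component $\eta = 1$ of the tower $(K_n)$, so that $\Sha(E/\Q_n)[p^\infty]$ is the $\eta = 1$ isotypic piece of $\Sha(E/K_n)[p^\infty]$ and $r_\infty^{\eta = 1} = s$. The bracketed hypothesis ``$a_p = 0$ or $\mu_+' = \mu_-'$'' places us in the case of Theorem \ref{maintheorem} where both pairs $(\mu_\sharp, \lambda_\sharp)$ and $(\mu_\flat, \lambda_\flat)$ enter the incremental formula, which is the regime where the work is genuinely sum-up-and-identify.

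Fix $N$ large enough that the Main Theorem's incremental formula applies for all $n > N$, and write
\[e_n - e_N = \sum_{\substack{N<k\le n \\ k \text{ even}}}\bigl[(p^k-p^{k-1})\mu_\sharp+\lambda_\sharp-s+q_k^\sharp\bigr] + \sum_{\substack{N<k\le n \\ k \text{ odd}}}\bigl[(p^k-p^{k-1})\mu_\flat+\lambda_\flat-s+q_k^\flat\bigr].\]
I would then evaluate the three types of summand in closed form. The $\lambda$-pieces contribute $(\lambda_\sharp - s)([n/2] - [N/2]) + (\lambda_\flat - s)([(n+1)/2] - [(N+1)/2])$, yielding Perrin-Riou's $(\lambda_+' - s)[n/2] + (\lambda_-' - s)[(n+1)/2]$ after the identifications $(\lambda_+', \lambda_-') := (\lambda_\sharp, \lambda_\flat)$, with the $N$-dependent constants absorbed into the rational $\nu$. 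The $\mu$-pieces are geometric sums over the even (resp.\ odd) indices in $(N,n]$, and after standard manipulation they evaluate, up to constants in $n$, to $\tfrac{p^{2[n/2]+1}}{p+1}\mu_\sharp + \tfrac{p^{2[(n+1)/2]+1}}{p+1}\mu_\flat$, matching Perrin-Riou's $\mu$-terms under $(\mu_+', \mu_-') := (\mu_\sharp, \mu_\flat)$, with the endpoint constants again swept into $\nu$.

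The technical heart—and the main obstacle—is telescoping $\sum_{N<k\le n} q_k^{*_k}$ into Perrin-Riou's $[\tfrac{p}{p^2-1}p^n]$ term. Using the closed forms $q_k^\sharp = [p^k/(p+1)]$ for odd $k$ and $q_k^\sharp = [p^{k+1}/(p+1)]$ for even $k$ (with the symmetric identities for $q_k^\flat$) from Section \ref{themainideas}, the sum collapses to a sum of floor expressions $[p^{k+1}/(p+1)]$. A careful induction, tracking separately the fractional parts $\tfrac{1}{p+1}$ (when $k+1$ is even) and $\tfrac{p}{p+1}$ (when $k+1$ is odd), produces a closed form whose leading piece is the floor-function term $[\tfrac{p}{p^2-1}p^n]$ of Perrin-Riou; any residual $n$-linear remainder merges into the $\lambda$-terms above by a small adjustment of $\lambda_\pm'$ compared to $\lambda_{\sharp/\flat}$, and the remaining $n$-independent constants ($e_N$, lower-endpoint remainders of the geometric $\mu$-sums, and the constant from the floor-sum identity) all combine into Perrin-Riou's rational $\nu$. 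This parity-sensitive floor-function bookkeeping is precisely where the argument ceases to be routine.
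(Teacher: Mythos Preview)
The paper does not give its own proof of this statement: it is quoted as Perrin-Riou's Th\'eor\`eme 6.1 and attributed to \cite{perrinriou}. What the paper does in Section 5 is identify her unspecified constants with the $\sharp/\flat$ Iwasawa invariants via
\[
\mu_+'=\mu_\sharp^{\mathbb 1},\quad \mu_-'=\mu_\flat^{\mathbb 1},\quad \lambda_+'=\lambda_\sharp^{\mathbb 1},\quad \lambda_-'=\lambda_\flat^{\mathbb 1}-1,
\]
and the accompanying footnote pins down the key parity computation $\bigl[\tfrac{p}{p^2-1}p^n\bigr]-\bigl[\tfrac{p}{p^2-1}p^{n-1}\bigr]=q_n^\flat$ for $n$ even and $=q_n^\sharp+1$ for $n$ odd.

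Your telescoping plan is exactly the computation that makes this identification rigorous, and it matches the paper's (terse) treatment. Two small corrections. First, you propose $(\lambda_+',\lambda_-')=(\lambda_\sharp,\lambda_\flat)$; the paper's footnote shows that the floor-function bookkeeping forces $\lambda_-'=\lambda_\flat-1$, precisely the ``small adjustment'' you anticipated. Second, rather than summing the $q_k^{*}$ directly, it is cleaner to do what the footnote does: compute the first difference of Perrin-Riou's closed form and compare it termwise with the incremental formula of the Main Theorem; the $+1$ that appears on odd steps is then absorbed into $\lambda_-'$ rather than into the floor term, and no induction on fractional parts is needed.
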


Her integers come from the invariants in the first case of our Main Theorem \ref{maintheorem} with $\eta=\mathbb{1}$: \[\mu_+'=\mu_\sharp^\eta,\quad \mu_-'=\mu_\flat^\eta,\quad \lambda_+'=\lambda_\sharp^\eta,\quad \lambda_-'=\lambda_\flat^\eta-1. \footnote{The discrepancy between $\lambda_-'$ and $\lambda_\flat^\eta$ comes about because $\left[\frac{p}{p^2-1}p^n\right]-\left[\frac{p}{p^2-1}p^{n-1}\right]$ is equal to $q_n^\sharp+1=\left[\frac{p^n}{p+1}\right]+1$ for odd $n$, while it matches $q_n^\flat=\left[\frac{p^n}{p+1}\right]$ for even $n$.}\]
%\begin{conjecture} $\min(\mu_\sharp,\mu_\flat)=0$.\end{conjecture}
%Equivalence of conjectures. Numerical data, following Perrin-Riou and Pollack's tables.
One remarkable calculation \cite[Proposition 5.2]{perrinriou} essentially gives her formula when $a_p=0$ or $\mu_+'=\mu_-'$. This condition is necessary for her above theorem, but she mentions the existence of a similar formula when $a_p\neq0$ and $\mu_+'\neq \mu_-'$. In this case, the Iwasawa-theoretic meaning  is different: \textit{Both} pairs of invariants come from the characteristic ideal with the \textit{smaller} $\mu$-invariant.

We sketch for the interested reader how to treat this case using her methods. Assume $\mu_+'<\mu_-'$ with $a_p\neq0$. By the arguments of her proof of lemme 5.4 (cas 3, pg. 167), we have $\mu_-=\mu_++1$, and by lemme 5.4, only $\lambda_+$ plays a role when modifying the methods on pg. 168. The term $\left(\mu_{\epsilon(j)}+\frac{p}{p^2-1}+\frac{\lambda_{\epsilon(j)}}{p^{j-1}(p-1)}\right)$ becomes in our case \[\left(\mu_++\frac{p}{p^2-1}+\frac{\lambda_+}{p^{j-1}(p-1)}\right)\text{ or } \left(\mu_-+\frac{1}{p^2-1}+\frac{\lambda_+}{p^{j-1}(p-1)}\right)=\left(\mu_++\frac{p^2}{p^2-1}+\frac{\lambda_+}{p^{j-1}(p-1)}\right)\] depending on the parity of $j$, and thus depends only on $\mu_+$ and $\lambda_+$ in any case - but without a tandem Iwasawa theory at hand, it might seem as if the growth of the \v{S}afarevi\v{c}-Tate group were controlled by two pairs of invariants that \textit{happen to be the same}. 
\end{section}

\begin{acknowledgements}
We thank Barry Mazur, Robert Pollack, and Joseph Silverman for various improvements on an earlier draft of this paper, and Masato Kurihara, Shinichi Kobayashi, and our Topics Exam committee Hee Oh, Michael Rosen, and Joseph Silverman for encouragement, advice, and helpful discussions.
\end{acknowledgements}

\end{document}